\tikzset{
	labl/.style={anchor=south, rotate=90, inner sep=.5mm}
}
\newcommand\reallywidetilde[1]{\ThisStyle{%
		\setbox0=\hbox{$\SavedStyle#1$}%
		\stackengine{-.1\LMpt}{$\SavedStyle#1$}{%
			\stretchto{\scaleto{\SavedStyle\mkern.2mu\AC}{.5150\wd0}}{.6\ht0}%
		}{O}{c}{F}{T}{S}%
	}}
	\newcolumntype{L}{>{$}l<{$}} % math-mode version of "l" column type
	\newcolumntype{C}{>{$}p{2 cm}<{$}} % math-mode version of "l" column type
	\newcolumntype{D}{>{$}p{1 cm}<{$}} % math-mode version of "l" column type
	\newcolumntype{E}{>{$}p{5 cm}<{$}} % math-mode version of "l" column type
	\newtheorem{thm}{Theorem}[section]
	\newtheorem{prop}[thm]{Proposition}
	\newtheorem{lem}[thm]{Lemma}
	\newtheorem{cor}[thm]{Corollary}
	\numberwithin{equation}{section}
	\theoremstyle{definition}
	\newtheorem{remark}[thm]{Remark}
	\newtheorem{ex}[thm]{Example}
	\newcommand{\Mot}{{\rm Mot}}
	\newcommand{\one}{\mathds{1}}
	\newcommand{\hh}{\mathfrak{h}}
	\newcommand{\ttt}{\mathfrak{t}}
	\newcommand{\Bl}{{\rm Bl}}
	\newcommand{\pr}{{\rm pr}}
	\newcommand{\pt}{{\rm pt}}
	\newcommand{\tr}{{\rm tr}}
	\newcommand{\sExt}{\cal{E}xt}
	\newcommand{\sE}{\cal{E}xt_\pi ^!}
	\newcommand{\ev}{{\rm ev}}
	\newcommand{\Db}{{\rm D}^{\rm b}}
	\newcommand{\Br}{{\rm Br}}
	\newcommand{\CH}{{\rm CH}}
	\newcommand{\ord}{{\rm ord}}
	\newcommand{\rk}{{\rm rk}}
	\newcommand{\coh}{{\rm Coh}}
	\newcommand{\Hom}{{\rm Hom}}
	\newcommand{\id}{{\rm id}}
	\newcommand{\ch}{{\rm ch}}
	\newcommand{\td}{{\rm td}}
	\newcommand{\R}{\mathbf{R}}
	\newcommand{\sHom}{\cal{H}om}
	\newcommand{\K}{{\rm K}}
	\newcommand{\KE}{{\rm Ker}}
	\newcommand{\cal}{\mathcal}
	\newcommand{\ka}{{\cal A}}
	\newcommand{\kc}{{\cal C}}
	\newcommand{\ke}{{\cal E}}
	\newcommand{\kf}{{\cal F}}
	\newcommand{\kg}{{\cal G}}
	\newcommand{\ki}{{\cal I}}
	\newcommand{\ko}{{\cal O}}
	\newcommand{\ks}{{\cal S}}
	\newcommand{\kx}{{\cal X}}
	\newcommand{\NN}{\mathbb{N}}
	\newcommand{\LL}{\mathbb{L}}
	\newcommand{\ZZ}{\mathbb{Z}}
	\newcommand{\QQ}{\mathbb{Q}}
	\newcommand{\CC}{\mathbb{C}}
	\newcommand{\PP}{\mathbb{P}}
	\renewcommand{\to}{\xymatrix@1@=15pt{\ar[r]&}}
	\newcommand{\lto}{\xymatrix@1@=15pt{&\ar[l]}}
	\renewcommand{\rightarrow}{\xymatrix@1@=15pt{\ar[r]&}}
	\renewcommand{\mapsto}{\xymatrix@1@=15pt{\ar@{|->}[r]&}}
	\newcommand{\mapslto}{\xymatrix@1@=15pt{&\ar@{|->}[l]&}}
	\renewcommand{\twoheadrightarrow}{\xymatrix@1@=18pt{\ar@{->>}[r]&}}
	\renewcommand{\hookrightarrow}{\xymatrix@1@=15pt{\ar@{^(->}[r]&}}
	\newcommand{\hook}{\xymatrix@1@=15pt{\ar@{^(->}[r]&}}
	\newcommand{\congpf}{\xymatrix@1@=15pt{\ar[r]^-\sim&}}
	\renewcommand{\cong}{\simeq}
	\def\blfootnote{\xdef\@thefnmark{}\@footnotetext}
		\def\MR#1{}
\begin{document}
	\title[Motives of moduli spaces and of special cubic fourfolds]{Motives of moduli spaces on K3 surfaces and of special cubic fourfolds}
	
	\author{Tim-Henrik Bülles}
	\address{Mathematisches Institut,
		Universit{\"a}t Bonn, Endenicher Allee 60, 53115 Bonn, Germany}
	\email{buelles@math.uni-bonn.de}
	\begin{abstract} \noindent
		For any smooth projective moduli space $M$ of Gieseker stable sheaves on a complex projective K3 surface (or an abelian surface) S, we prove that the Chow motive $\hh(M)$ becomes a direct summand of a motive $\bigoplus \hh(S^{k_i})(n_i)$ with $k_i\leq \dim(M)$. The result implies that finite dimensionality of $\hh(M)$ follows from finite dimensionality of $\hh(S)$. The technique also applies to moduli spaces of twisted sheaves and to moduli spaces of stable objects in $\Db(S,\alpha)$ for a Brauer class $\alpha\in\Br(S)$. In a similar vein, we investigate the relation between the Chow motives of a K3 surface $S$ and a cubic fourfold $X$ when there exists an isometry $\widetilde H(S,\alpha,\ZZ) \cong \widetilde H(\ka_X,\ZZ)$. In this case, we prove that there is an isomorphism of transcendental Chow motives $\ttt(S)(1) \cong \ttt(X)$.
		\vspace{-2mm}
	\end{abstract}
	\maketitle 
\section*{Introduction}
\blfootnote{The author is partially supported by SFB/TR 45 `Periods, Moduli Spaces and Arithmetic of Algebraic Varieties' of the DFG (German Research Foundation).}

Given a moduli space $M$ of stable sheaves on a K3 surface $S$, one expects that certain invariants of $M$ are determined by the geometry of $S$. We will study the relation between the Chow groups and motives of $M$ and $S$. The analogous question for moduli spaces of stable vector bundles on a curve has been settled by del Ba\~no \cite{dB}. He showed that the Chow motive of the moduli space is contained in the full pseudo-abelian tensor subcategory generated by the motive of the curve and the Lefschetz motive. For surfaces, a natural notion of stability for sheaves is provided by Gieseker stability. More generally, we will consider stability for $\alpha$-twisted sheaves with $\alpha\in \Br(S)$ a Brauer class. The case of a moduli space of Gieseker stable sheaves corresponds to the trivial Brauer class~$\alpha=1$. The first main result of this paper is the following:
\begin{thm} \label{thm:main1}
	Let $S$ be a complex projective K3 surface or an abelian surface and $\alpha \in \Br(S)$. Assume that $M$ is one of the following:
	\begin{itemize}
		\item a smooth projective moduli space of Gieseker stable $\alpha$-twisted sheaves or
		\item a smooth projective moduli space of $\sigma$-stable objects in $\Db(S,\alpha)$, where $\sigma$ is a generic stability condition.
	\end{itemize}
	Then the Chow motive $\hh(M)$ of $M$ is a direct summand of a motive $\bigoplus \hh(S^{k_i})(n_i)$ for some $1\leq k_i \leq \dim M$, $n_i\in \ZZ$.
\end{thm}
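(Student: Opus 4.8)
The plan is to realize $M$ as a moduli space via a universal (quasi-)family and exploit the Beauville–Voisin / Markman theory of the action of correspondences. Concretely, I would start from the fact that $M$ carries a (twisted) universal sheaf $\mathcal{E}$ on $S\times M$ (possibly after passing to a Brauer–Severi twist, which is harmless at the level of rational Chow motives), and use its Mukai vector to produce correspondences between $M$ and products $S^k$. The key input is a Künneth-type decomposition: I expect $\mathcal{E}$, viewed as an object of $\Db(S\times M)$, to admit a description in terms of the geometry of $S$ alone, so that the diagonal $\Delta_M$ can be written as a composition of correspondences factoring through powers $S^{k}$.

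The main technical engine should be a \emph{Chow–Künneth / derived} argument. I would consider the Fourier–Mukai functor $\Phi_{\mathcal{E}}\colon \Db(S,\alpha)\to\Db(M)$ together with its adjoint, whose composition is the identity-like kernel supported on the diagonal. Resolving the diagonal of $M$ through the universal family, one obtains that $\Delta_M$ is algebraically equivalent (indeed rationally equivalent after suitable manipulation) to a sum of products of cycle classes pulled back from $S$. The crucial step is to bound the number of copies of $S$ needed by $\dim M$: since $\mathrm{Ext}^\bullet$ between the universal objects can be computed, and these moduli spaces are holomorphic symplectic (or deformations of Hilbert schemes / generalized Kummers), I would invoke the structure of their derived category and the fact that $M$ is deformation equivalent to $S^{[n]}$ to control the length $k_i$ of the powers appearing.

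First I would treat the untwisted Gieseker case to fix notation, constructing the universal sheaf and expressing $[\Delta_M]\in \CH^{\dim M}(M\times M)$ as a polynomial in Chern classes of $\mathcal{E}$; by the projection formula these Chern classes are pushed forward from $S\times M$, which lets one factor the induced self-correspondence through motives of the form $\hh(S^{k_i})(n_i)$. The passage from this factorization of $[\Delta_M]$ to a splitting of $\hh(M)$ as a direct summand is formal, using the pseudo-abelian structure of the category of Chow motives: a factorization $\id_{\hh(M)} = q\circ p$ with $p,q$ factoring through $\bigoplus \hh(S^{k_i})(n_i)$ exhibits $\hh(M)$ as a summand. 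Then I would extend to the twisted case by replacing $S$ with a suitable Brauer–Severi or gerbe resolution (or by using that twisted Chow motives with $\QQ$-coefficients agree with untwisted ones), and finally to the case of $\sigma$-stable objects by deformation invariance of the motive, reducing a generic Bridgeland moduli space to a Gieseker one via wall-crossing and the fact that birational holomorphic symplectic manifolds have isomorphic Chow motives.

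The hard part will be establishing that the Chern classes of the universal object $\mathcal{E}$ actually suffice to generate the diagonal $[\Delta_M]$ and, simultaneously, that the powers $S^{k_i}$ occurring can be bounded by $\dim M$ rather than by some larger combinatorial quantity. Controlling $k_i \leq \dim M$ is delicate: a naive expansion of $[\Delta_M]$ as a polynomial in $\ch(\mathcal{E})$ produces terms involving arbitrarily high external products of cycles on $S$, and one must argue that the motivically relevant contributions collapse to powers of bounded length — presumably by exploiting the Mukai lattice structure and an inductive argument on $\dim M$ that mirrors the nested Hilbert scheme / Nakajima-type stratification. This bounding step, rather than the formal motivic splitting, is where the real work lies.
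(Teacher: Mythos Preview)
Your overall strategy---factor the identity correspondence $[\Delta_M]$ through powers of $S$ using the Chern classes of the (quasi-)universal family, then invoke the pseudo-abelian structure---is exactly the one the paper uses. However, two of your steps are miscalibrated in ways worth flagging.

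First, the Fourier--Mukai adjunction framing is misleading. For $\dim M>2$ the functor $\Phi_{\mathcal{E}}\colon \Db(S,\alpha)\to\Db(M)$ is \emph{not} fully faithful, so the composition with its adjoint is not the identity and does not directly produce $[\Delta_M]$. The paper instead uses the relative Ext complex on $M\times M$: with $E$ a quasi-universal sheaf on $M\times S$, $F$ its transpose, and $\pi\colon M\times S\times M\to M\times M$ the projection, the key input is Markman's theorem that
\[
c_{m}\bigl(-[\mathcal{E}xt^{!}_{\pi}]\bigr)=[\Delta_M]\in\CH^{m}(M\times M),\qquad m=\dim M.
\]
Grothendieck--Riemann--Roch then shows that each Chern character $\ch_n$ of this class is a single composition $M\to S\to M$, hence lies in the ideal $I\subseteq \CH^*(M\times M)_{\QQ}$ of correspondences factoring through some $S^k$. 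This is the precise mechanism replacing your ``resolving the diagonal through the universal family.''

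Second, the bound $k_i\le\dim M$ is not the hard part; it is essentially formal. Once you know each $\ch_n\in I_1$ (factors through $S^1$), observe that the ideal $I$ is closed under intersection products and carries a filtration with $I_k\cdot I_{k'}\subseteq I_{k+k'}$. Since $c_m$ is a polynomial of degree $m$ in the $\ch_n$, one gets $[\Delta_M]=c_m\in I_m$ immediately. No deformation to $S^{[n]}$, Nakajima operators, or Mukai lattice arguments are needed.

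For the $\sigma$-stable case your wall-crossing/birational-invariance route would work, but the paper is more direct: Markman's Chern class computation has been checked to hold verbatim for moduli of Bridgeland-stable objects (as observed by Marian--Zhao), so the same proof applies without passing through a Gieseker model.
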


As a direct consequence, finite dimensionality of the motive of $S$ implies the same for $M$:
\begin{cor}
	Let $S$ and $M$ be as above. If $\hh(S)$ is finite dimensional, then $\hh(M)$ is finite dimensional as well.\qed
\end{cor}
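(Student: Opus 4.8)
The plan is to derive the corollary directly from Theorem~\ref{thm:main1} together with the formal closure properties of finite dimensionality in the sense of Kimura and O'Sullivan. Recall that a Chow motive is finite dimensional if it decomposes as the direct sum of an evenly and an oddly finite dimensional motive, and that by Kimura's work this class of motives is stable under direct sums, tensor products, Tate twists, and passage to direct summands. These four stability properties are exactly what the shape of the conclusion in Theorem~\ref{thm:main1} is designed to exploit.

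First I would invoke Theorem~\ref{thm:main1} to realise $\hh(M)$ as a direct summand of a motive of the form $\bigoplus_i \hh(S^{k_i})(n_i)$. Next, using the multiplicativity $\hh(S^{k}) \cong \hh(S)^{\otimes k}$, I would propagate the hypothesis that $\hh(S)$ is finite dimensional to each factor: since finite dimensionality is preserved under tensor products, $\hh(S)^{\otimes k_i}$ is finite dimensional for every $i$. Because the Lefschetz motive is evenly finite dimensional, tensoring with it — that is, applying the Tate twist $(n_i)$ — preserves the property, so each summand $\hh(S^{k_i})(n_i)$ is finite dimensional.

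Finally, a finite direct sum of finite dimensional motives is again finite dimensional, so $\bigoplus_i \hh(S^{k_i})(n_i)$ is finite dimensional, and hence so is its direct summand $\hh(M)$. I do not expect any genuine obstacle here: the entire content sits in Theorem~\ref{thm:main1}, with the remainder being the formal stability of Kimura's notion. The only point requiring minor care is that the index set in Theorem~\ref{thm:main1} is finite, which is precisely what makes the direct-sum closure applicable, an infinite direct sum of finite dimensional motives need not be finite dimensional.
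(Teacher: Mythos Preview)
Your proposal is correct and matches the paper's approach: the paper marks the corollary with an immediate \qed, relying on Theorem~\ref{thm:main1} together with the closure properties of finite dimensionality recorded in Proposition~\ref{prop:Kimura}. Your write-up simply makes explicit the steps (tensor powers, Tate twists, finite direct sums, direct summands) that the paper leaves implicit.
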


Although finite dimensionality is expected for all motives of smooth projective varieties, only a few families of K3 surfaces with finite dimensional motives are known. Even fewer examples are known in higher dimension; one example is provided by the Hilbert scheme $S^{[n]}$ of a K3 surface $S$ with finite dimensional motive, see \cite{dCM}.

The second half of this paper has a similar flavour; we investigate the relation between K3 surfaces and cubic fourfolds on the level of algebraic cycles. Recall that cubic fourfolds admitting a labelling of discriminant $d$ form a divisor $\kc_d \subseteq \kc$ inside the moduli space of smooth complex cubic fourfolds (see Section~\ref{subsec:SpecialCubics} for a brief review of the relevant notions). For a cubic fourfold $X$, we denote by $\ka_X\subseteq \Db(X)$ the Kuznetsov component of the derived category \cite{Kuz}.  We prove the following result:
\begin{thm} \label{thm:main2}
	Let $X\in \kc_d$ be a special cubic fourfold. Assume that there exist a K3 surface~$S$, a Brauer class $\alpha \in \Br(S)$ and a Hodge isometry $\widetilde H(S,\alpha, \ZZ) \cong \widetilde H(\ka_X, \ZZ)$. Then there is a cycle $Z \in \CH^3(S\times X)$ inducing an isomorphism of Chow groups $\CH_0(S)_{\hom} \congpf \CH_1(X)_{\hom}$ and transcendental motives $\ttt(S)(1) \cong \ttt(X)$. Furthermore, $\hh(X)\cong \one \oplus \hh(S)(1) \oplus \LL^2 \oplus \LL^4$ and, therefore, $\hh(S)$ is finite dimensional if and only if $\hh(X)$ is finite dimensional.
\end{thm}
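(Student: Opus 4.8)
The plan is to deduce the statement from an equivalence of triangulated categories together with a Fourier--Mukai kernel whose Mukai vector will supply the correspondence $Z$. First I would upgrade the Hodge isometry $\widetilde{H}(S,\alpha,\ZZ)\cong\widetilde{H}(\ka_X,\ZZ)$ to an exact equivalence $\Phi\colon\Db(S,\alpha)\isomor\ka_X$, invoking the results relating Hodge isometries to equivalences of Kuznetsov components (Addington--Thomas and the subsequent construction of stability conditions on $\ka_X$). Since $\Db(S,\alpha)$ is geometric and $\ka_X$ is a $2$-Calabi--Yau category, the composite of $\Phi$ with the inclusion $\ka_X\subseteq\Db(X)$ is of Fourier--Mukai type, represented by a kernel $\ke\in\Db(S\times X)$ twisted by $\alpha$ along the $S$-factor. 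Taking the ($B$-twisted) Mukai vector $v(\ke)=\ch(\ke)\sqrt{\td_{S\times X}}\in\bigoplus_i\CH^i(S\times X)_\QQ$ produces an algebraic correspondence whose cohomological action realizes the given isometry; by the codimension bookkeeping its component $v(\ke)_3\in\CH^3(S\times X)$ acts as a map $\CH_0(S)\longrightarrow\CH_1(X)$ and sends $H^2_{\mathrm{tr}}(S)$ into $H^4_{\mathrm{tr}}(X)$, so it will serve as $Z$.

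Next I would compute the action of $Z$ on transcendental cohomology and address the Brauer twist. Writing $B\in H^2(S,\QQ)$ for a rational lift of $\alpha$, the twisted period is $\exp(B)\sigma$, and over $\QQ$ the twisted transcendental lattice is $T(S,\alpha)=\exp(B)\,T(S)$, abstractly Hodge-isometric to $T(S)$. The delicate point is that $\exp(B)$ is not visibly algebraic; however, for $\tau\in T(S)$ the shearing correction $B\cup\tau$ lies in $H^4(S,\QQ)=\QQ\cdot[\pt]$ and is therefore algebraic, so after projecting to the transcendental part $H^4_{\mathrm{tr}}(X)$ of the target it disappears. Using that the transcendental projectors $\pi^{\mathrm{tr}}_S$ and $\pi^{\mathrm{tr}}_X$ are algebraic (for surfaces classically, for cubic fourfolds via the Hodge conjecture in degree $4$), I conclude that $\pi^{\mathrm{tr}}_X\circ Z\circ\pi^{\mathrm{tr}}_S$ induces exactly the Hodge isometry $T(S)\cong T(X)$. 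Running the same construction for the inverse equivalence $\Psi$ produces a correspondence $Z'$ in the opposite direction; since the convolution of the two kernels is $\ko_\Delta$, the induced transforms on Chow groups are mutually inverse, giving in particular the isomorphism $\CH_0(S)_{\hom}\congpf\CH_1(X)_{\hom}$.

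The main obstacle is to promote these cohomological and Chow-group isomorphisms to an isomorphism of transcendental motives $\ttt(S)(1)\cong\ttt(X)$. Here I would consider the endomorphisms $Z'\circ Z-\id$ of $\ttt(S)(1)$ and $Z\circ Z'-\id$ of $\ttt(X)$. Each acts as zero on transcendental cohomology, because the two isometries are inverse, and as zero on the transcendental Chow groups, because the Chow-level transforms are inverse; the remaining task is to conclude that a correspondence which is zero on transcendental cohomology and on $\CH_0$ over a universal domain is already zero on the transcendental summand. For K3 surfaces this is exactly the conservativity underlying Huybrechts's theorem that derived equivalent K3 surfaces have isomorphic Chow motives. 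I expect the hardest part to be the analogue for the cubic fourfold: the argument for $\ttt(X)$ must be run through the K3 category $\ka_X$ rather than through an honest surface, and it must avoid assuming finite dimensionality.

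With the isomorphism of transcendental motives in hand, the remaining summands are all Tate and the decomposition follows formally. A cubic fourfold has vanishing odd cohomology and satisfies the Hodge conjecture in degree $4$, so it admits an algebraic Chow--K\"unneth decomposition $\hh(X)=\one\oplus\LL\oplus\hh^4(X)\oplus\LL^3\oplus\LL^4$ with $\hh^4(X)=\ttt(X)\oplus(\LL^2)^{\oplus a}$, while for the K3 surface $\hh(S)=\one\oplus(\ttt(S)\oplus\LL^{\oplus\rho})\oplus\LL^2$ with $\rho=\rk\NS(S)$. The Hodge isometry matches transcendental ranks, and combined with the equality $\dim T(S)=\dim T(S,\alpha)$ coming from $\exp(B)$ it forces $a=\rho+1$; comparing Lefschetz summands and substituting $\ttt(X)\cong\ttt(S)(1)$ then yields $\hh(X)\cong\one\oplus\hh(S)(1)\oplus\LL^2\oplus\LL^4$. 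Finally, Tate motives are finite dimensional, so $\hh(S)$ is finite dimensional precisely when $\ttt(S)$ is, equivalently when $\ttt(S)(1)\cong\ttt(X)$ is, equivalently when $\hh(X)$ is.
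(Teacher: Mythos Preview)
Your outline has the right shape but contains a genuine gap at the step where you claim the Chow transforms are mutually inverse. The functor $\iota\circ\Phi\colon\Db(S,\alpha)\to\Db(X)$ is only fully faithful, not an equivalence; its adjoint $\Psi\circ\iota^*$ satisfies $(\Psi\circ\iota^*)\circ(\iota\circ\Phi)\cong\id$ on $\Db(S,\alpha)$, but the other composite $(\iota\circ\Phi)\circ(\Psi\circ\iota^*)$ is the projection onto $\ka_X$, not the identity of $\Db(X)$. So the kernel convolution on $X\times X$ is \emph{not} $\ko_{\Delta_X}$, and you do not get $Z\circ Z'=\id$ on $\CH_1(X)_{\hom}$ for free. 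This is exactly where the paper does real work: surjectivity of $Z_*\colon\CH_0(S)_{\hom,\QQ}\to\CH_1(X)_{\hom,\QQ}$ is proved by exhibiting, for each line $\ell\subset X$, the second syzygy sheaf $\kf_\ell$ of $\ki_\ell(1)$ as an object of $\ka_X$ and checking that $v(\kf_\ell)-v(\kf_{\ell'})=[\ell]-[\ell']$, together with Paranjape's result that lines generate $\CH_1(X)$. Without this (or an equivalent argument showing that the Mukai vector of the projection kernel acts as the identity on $\CH_1(X)_{\hom}$), your isomorphism claim is unproven.

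Two further points. First, at the time the paper was written the equivalence $\Db(S,\alpha)\simeq\ka_X$ was only established for \emph{generic} $X\in\kc_d$; the paper therefore runs a specialization argument over a DVR to reach arbitrary $X$, which your proposal omits. Second, your plan to upgrade the Chow isomorphism to an isomorphism of transcendental motives via a conservativity statement on $\ttt(X)$ is more delicate than necessary. The paper instead invokes the variant of Manin's identity principle (as in Gorchinsky--Guletski\u{\i}, Vial, Pedrini): over a universal domain, a correspondence inducing an isomorphism $\CH_0(S)_{\hom,\QQ}\to\CH_1(X)_{\hom,\QQ}$ automatically induces an isomorphism $\ttt(S)(1)\cong\ttt(X)$. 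This sidesteps entirely the ``hardest part'' you flag.
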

Recall that a (twisted) K3 surface and a Hodge isometry $\widetilde H(S,\alpha, \ZZ) \cong \widetilde H(\ka_X, \ZZ)$ as above exist if and only if $d$ satisfies the numerical condition ($\ast$$\ast'$). 

The two results fit into the following picture. For a variety $X$ we denote by $\Mot(X)$ the full pseudo-abelian tensor subcategory of motives generated by $\hh(X)$ and the Lefschetz motive $\LL$. Let now $X$ be a cubic fourfold and $F$ its Fano variety of lines, which is a hyperkähler variety of dimension four. It is known that the motive of $F$ is contained in $\Mot(X)$ (we say that $\hh(F)$ is motivated by $\hh(X)$ following Arapura \cite{Arapura}). Indeed, Laterveer~\cite{LatFano} proved a formula for Chow motives, which is similar to the result obtained by Galkin--Shinder~\cite{GalSh} in the Grothendieck ring of varieties:
\[ \hh(F)(2) \oplus \bigoplus_{i=0} ^4 \hh(X)(i) \cong \hh(X^{[2]}). \]
Since the Hilbert scheme $X^{[2]}$ can be described as a blow-up of the symmetric product $X^{(2)}$ along the diagonal, its motive is motivated by $\hh(X)$. In Section~\ref{subsec:MotiveFano} we will argue that $\hh(X)$ is also motivated by $\hh(F)$, see also \cite[Thm.\ 4.5]{BolPed}:

\begin{cor} \label{cor:main3}
	Let $X$ be a cubic fourfold and $F$ its Fano variety of lines. The full pseudo-abelian tensor categories of motives generated by the Lefschetz motive and $\hh(X)$ and $\hh(F)$ resp., agree: 
	\[\Mot(X) = \Mot(F). \]
	In particular, $\hh(X)$ is finite dimensional if and only if $\hh(F)$ is finite dimensional.
\end{cor}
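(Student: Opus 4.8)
The plan is to establish the two inclusions $\Mot(F)\subseteq\Mot(X)$ and $\Mot(X)\subseteq\Mot(F)$ separately. Since both subcategories are by definition closed under direct sums, direct summands, tensor products and Tate twists, it is enough to verify $\hh(F)\in\Mot(X)$ for the first inclusion and $\hh(X)\in\Mot(F)$ for the second.

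For $\hh(F)\in\Mot(X)$ I would read off the answer from Laterveer's formula \cite{LatFano}, which exhibits $\hh(F)(2)$ as a direct summand of $\hh(X^{[2]})$; it therefore suffices to show $\hh(X^{[2]})\in\Mot(X)$. Writing $X^{[2]}$ as the blow-up of the symmetric square $X^{(2)}$ along the diagonal $\Delta\iso X$, which sits in codimension $4$, the blow-up formula gives
\[ \hh(X^{[2]})\iso \hh(X^{(2)})\oplus \hh(X)(1)\oplus\hh(X)(2)\oplus\hh(X)(3). \]
Now $\hh(X^{(2)})\iso\Sym^2\hh(X)$ is a direct summand of $\hh(X)\otimes\hh(X)=\hh(X\times X)$ and hence lies in $\Mot(X)$, so the whole right-hand side does. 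Consequently $\hh(F)(2)$, and therefore $\hh(F)$, belongs to $\Mot(X)$.

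The reverse inclusion is the substantial point, and I expect it to be the main obstacle. Here I would use the universal line $P\subseteq F\times X$, i.e.\ the incidence variety $\{(\ell,x):x\in\ell\}$. The first projection $p\colon P\longrightarrow F$ is a $\PP^1$-bundle, so the projective bundle formula yields $\hh(P)\iso\hh(F)\oplus\hh(F)(1)$; in particular $\hh(P)\in\Mot(F)$. The incidence correspondence $[P]$ then relates the motives of $X$ and $F$: following Beauville--Donagi on the level of cohomology and its motivic refinement in \cite{BolPed}, the correspondence induced by $P$ gives an isomorphism of transcendental motives $\ttt(X)\iso\ttt(F)(-1)$. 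Since $\ttt(F)$ is a direct summand of $\hh(F)$, its twist $\ttt(F)(-1)\iso\ttt(X)$ lies in $\Mot(F)$. As $\hh(X)$ decomposes as a direct sum of Lefschetz motives and the transcendental motive $\ttt(X)$, we conclude $\hh(X)\in\Mot(F)$. The delicate part is precisely the passage from the classical Beauville--Donagi isomorphism of Hodge structures to an isomorphism of Chow motives cut out by an explicit correspondence, together with the bookkeeping of the algebraic (Tate) summands of $\hh(X)$ and $\hh(F)$.

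Finally, the finite dimensionality statement follows formally: in the sense of Kimura--O'Sullivan, finite dimensionality is stable under direct sums, tensor products, direct summands and Tate twists. Hence every object of $\Mot(X)=\Mot(F)$ is finite dimensional as soon as the single generator $\hh(X)$, equivalently $\hh(F)$, is, which gives the last assertion.
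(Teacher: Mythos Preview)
Your treatment of the inclusion $\Mot(F)\subseteq\Mot(X)$ matches the paper exactly: Laterveer's formula together with the blow-up description of $X^{[2]}$.

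For the reverse inclusion the paper proceeds differently, and in a way that avoids the ``delicate part'' you flag. Rather than invoking an isomorphism $\ttt(X)\cong\ttt(F)(-1)$, the paper proves only the weaker (but sufficient) statement that $\ttt(X)$ is a \emph{direct summand} of $\hh(F)(-1)$. This is done via the surjectivity criterion of Lemma~\ref{lem:surj}: the universal line $L\in\CH^3(F\times X)$ defines a morphism $\hh(F)(-1)\to\ttt(X)$, and one checks that it is surjective on Chow groups after every finitely generated field extension $K/\CC$. The only nontrivial Chow group of $\ttt(X_K)$ is $\CH_1(X_K)_{\hom,\QQ}$, and the key geometric input is that this group is generated by lines \emph{universally} (Shen \cite{Mingmin2}), so $L_*$ hits everything. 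Thus the passage from Beauville--Donagi in cohomology to Chow motives is bypassed entirely: no transcendental motive $\ttt(F)$ is needed, and no identification of it with $\ttt(X)$ is required.

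Your approach, by contrast, outsources the crucial step to \cite{BolPed}. If Theorem~4.5 there indeed delivers $\ttt(X)\cong\ttt(F)(-1)$ as a statement about Chow motives, your argument is complete; but note that you are then implicitly using a definition of $\ttt(F)$ for a hyperk\"ahler fourfold, which is less standard than in the surface case. The paper's route is more economical because it sidesteps both issues: it needs neither to define $\ttt(F)$ nor to upgrade an isomorphism of Hodge structures, only to verify surjectivity on Chow groups, which is a one-line application of the cylinder map.
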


To compare this result with Theorem \ref{thm:main1}, assume that $X$ is a special cubic fourfold satisfying condition ($\ast$$\ast'$), which is equivalent to the Fano variety $F$ being birational to a moduli space $M$ of stable twisted sheaves on a K3 surface $S$, cf.\ \cite[Prop.\ 4.1]{HuyK3cat}. In this case, all of the following categories of motives agree:
\[\Mot(S) = \Mot(M) = \Mot(F) = \Mot(X).\]
Indeed, we know that birational hyperkähler varieties have isomorphic Chow motives, see Proposition~\ref{prop:Ulrike}. This induces the middle equality. It follows from Theorem~\ref{thm:main2} that $\Mot(S)$ and $\Mot(X)$ coincide. For an arbitrary complex projective K3 surface and a moduli space $M$ as in Theorem~\ref{thm:main1} we have at least an inclusion $\Mot(M) \subseteq \Mot(S)$ which we expect to be an equality as well, see Remark \ref{rem:RF} for some comments.
\subsection*{Acknowledgements} I am grateful to Daniel Huybrechts for invaluable suggestions and explanations. This work has benefited from many discussions with Thorsten Beckmann, whom I wish to thank. Finally, many thanks to Axel Kölschbach and Andrey Soldatenkov for helping to improve the exposition. This work is part of the author's Master's thesis.
\subsection*{Notations and Conventions}
We will work over the complex numbers unless otherwise stated. The bounded derived category of coherent sheaves on a smooth projective variety $X$ is denoted by $\Db(X)$. Throughout, all motives are meant to be Chow motives with rational coefficients, see Section~\ref{sec:Preliminaries}.
\section{Preliminaries} \label{sec:Preliminaries}

We briefly review the main facts about Chow motives of K3 surfaces and cubic fourfolds. The objects of the category $\Mot_\CC$ of Chow motives are triples $(X,p,m)$, with $X$ a smooth projective variety over $\CC$, $p\in \CH^{\dim(X)}(X\times X)_\QQ$ a projector (with respect to convolution) and $m$ an integer. Morphisms are defined by 
\[\Hom ((X,p,m), (Y,q,n)) = q \circ \CH^{\dim(X)+n-m} (X \times Y)_\QQ \circ p. \]
The motive of a smooth projective variety $X$ is defined as $\hh(X)=(X,[\Delta_X],0)$. We denote the motive of a point by $\one$ and the Lefschetz motive by $\LL$. Recall that for a K3 surface $S$ there is a decomposition (see e.g.\ \cite[Ch.\ 6.3]{PureMotives}):
\[ \hh(S) \cong \one \oplus \LL^{\oplus\rho(S)} \oplus \ttt(S) \oplus \LL^2. \]
The only mysterious part is the {\it transcendental motive} $\ttt(S)=(S,\pi_S^{2,\tr},0)$. The motive of a cubic fourfold $X$ splits similarly (cf.\ \cite[Sec.\ 4]{BolPed}):
\[ \hh(X) \cong \one \oplus \LL \oplus (\LL^2)^{\oplus \rho_2} \oplus \ttt(X) \oplus \LL^3 \oplus \LL^4,\]
where $\rho_2=\dim H^{2,2}(X,\QQ)$. Again, the only part which remains unclear is the transcendental motive $\ttt(X)=(X,\pi_X^{4,\tr},0)$. The above decompositions are so called {\it refined Chow--Künneth decompositions}, see \cite[Ch.\ 6.1]{PureMotives}. The Chow and cohomology groups of the transcendental motives are given by:
\begin{align*}
&H^*(\ttt(S))=H^2(\ttt(S)) = T(S)_\QQ &\text{and} \qquad &\CH^*(\ttt(S))=\CH^2(\ttt(S)) = \CH_0(S)_{\hom,\QQ}, \\
&H^*(\ttt(X))=H^4(\ttt(X)) = T(X)_\QQ &\text{and} \qquad &\CH^*(\ttt(X))=\CH^3(\ttt(X)) = \CH_1(X)_{\hom,\QQ}, \end{align*}
where $T(S)$ and $T(X)$ are the transcendental lattices.

\begin{remark}
	One can also consider the following (coarser) decomposition of the motive of a cubic fourfold $X$, which will be used in the proof of Theorem~\ref{thm:main2}. Let $h\in \CH^1(X)$ be the class of a hyperplane section and $\pt$ the class of any closed point. Define the primitive projector $ \pi_X^{\pr}=[\Delta_X] - [\pt\times X] - \frac{1}{3}[h^3 \times h] -\frac{1}{3}[h^2 \times h^2] - \frac{1}{3}[h \times h^3] - [X \times \pt]$ and the {\it primitive motive} $\hh^{\pr}(X) = (X,\pi_X^{\pr},0)$. There is a decomposition:
	\[ \hh(X) \cong \one \oplus \LL \oplus \LL^2 \oplus \hh^{\pr}(X) \oplus \LL^3 \oplus \LL^4. \]
\end{remark}

Recall the notion of a surjective morphism of motives $f\colon M \to N$. It means that the induced map $\CH^*(M\otimes \hh(Z)) \to \CH^*(N\otimes \hh(Z))$ is surjective for all smooth projective varieties $Z$, cf.\ \cite[Sec.\ 5.4]{PureMotives}. Equivalently, $f$ admits a right inverse and $N$ becomes a direct summand of $M$, see \cite[Ex.\ 2.3.(vii), Lem.\ 5.4.3]{PureMotives}. It is well known (cf.\ \cite[Lem.\ 3.2]{Vial}, \cite[Lem.\ 4.3]{Pedrini}) that it suffices to check surjectivity of $\CH^i(M_K) \to \CH^i(N_K)$ for all function fields:
\begin{lem} \label{lem:surj}
	Let $M=(X,p,m)$, $N=(Y,q,n) \in \Mot_\CC$ and $f\in \Hom(M,N)$ a morphism of motives. Assume that $(f_K)_* \colon \CH^i(M_K) \to \CH^i(N_K)$ is surjective for all finitely generated field extensions $\CC \subseteq K$. Then $f$ is surjective.
\end{lem}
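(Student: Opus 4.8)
The plan is to verify the definition of surjectivity directly: I will show that for every smooth projective variety $Z$ the induced map $(f\otimes\id_{\hh(Z)})_*\colon \CH^*(M\otimes\hh(Z))\longrightarrow \CH^*(N\otimes\hh(Z))$ is surjective, proceeding by induction on $\dim Z$. Writing $M=(X,p,m)$ and $N=(Y,q,n)$, the essential tool is restriction to the generic fibre of the projection to $Z$: for $K=\CC(Z)$ the maps $r_M\colon \CH^*(M\otimes\hh(Z))\longrightarrow \CH^*(M_K)$ and $r_N\colon \CH^*(N\otimes\hh(Z))\longrightarrow \CH^*(N_K)$ are surjective, they are compatible with $f$ in the sense that $r_N\circ (f\otimes\id)_*=(f_K)_*\circ r_M$, and — by the localization sequence together with the continuity $\CH^*(Y_K)=\varinjlim_{U\subseteq Z}\CH^*(Y\times U)$ — the restriction $\CH^*(Y\times Z)\longrightarrow \CH^*(Y_K)$ is surjective with kernel the union, over proper closed subvarieties $W\subsetneq Z$, of the images of the proper pushforwards $\CH^*(Y\times W)\longrightarrow \CH^*(Y\times Z)$; the map $r_N$ is the one induced on the direct summand cut out by $q$. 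The base case $\dim Z=0$ (so $K=\CC$) is exactly the hypothesis.

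For the inductive step, fix $Z$ of dimension $d>0$ and a class $\beta\in\CH^i(N\otimes\hh(Z))$. Since $K=\CC(Z)$ is finitely generated over $\CC$, the hypothesis provides $\bar\alpha\in\CH^i(M_K)$ with $(f_K)_*\bar\alpha=r_N(\beta)$; lifting $\bar\alpha$ through the surjection $r_M$ to some $\alpha_0$, the classes $\beta$ and $(f\otimes\id)_*\alpha_0$ agree after applying $r_N$. Hence their difference lies in $\ker r_N$, so it is the pushforward of a class on $Y\times W$ for some proper closed $W\subsetneq Z$. It remains to correct this error term, which is where the induction enters.

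To do so I resolve $W$: I choose a smooth projective $\widetilde W$ of dimension $<d$ together with a proper surjection $\nu\colon \widetilde W\longrightarrow W$. Because all Chow groups carry $\QQ$-coefficients, $\nu_*$ (and likewise $(\id_Y\times\nu)_*$) is surjective, so the error term descends to a class on $Y\times\widetilde W$, that is, after projecting by $q$, to a class in $\CH^i(N\otimes\hh(\widetilde W))$. By the inductive hypothesis applied to $\widetilde W$, this class is hit by $(f\otimes\id_{\widetilde W})_*$; pushing the resulting preimage forward along $X\times\widetilde W\longrightarrow X\times Z$ and using that the correspondence $f$ acts only on the $X,Y$-factors and therefore commutes with pushforward in the $Z$-direction, I obtain $\eta\in\CH^i(M\otimes\hh(Z))$ with $(f\otimes\id)_*\eta$ equal to the error term. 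Then $\beta=(f\otimes\id)_*(\alpha_0+\eta)$, completing the induction.

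The main obstacle is the control of $\ker r_N$: a single function field only recovers the generic fibre, so the cycles supported over proper closed subsets of $Z$ must be treated separately, and it is precisely this that forces the Noetherian induction on $\dim Z$. The two points to get right there are that passing to $\QQ$-coefficients makes proper surjective pushforwards surjective (so the error descends to a smooth model of smaller dimension), and that the action of $f$ genuinely commutes with pushforward along the resolution in the $Z$-direction.
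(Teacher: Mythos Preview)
Your argument is correct and follows essentially the same route as the paper's own proof: induction on $\dim Z$, restriction to the function field $K=\CC(Z)$, and handling the error term supported over a proper closed subset. The paper's proof is terser---it simply takes the closure $\bar\delta$ of a preimage $\delta\in\CH^i(M_K)$ and observes that $\gamma-(f_Z)_*\bar\delta$ is supported on $Y\times Z'$ with $Z'\subsetneq Z$---whereas you make the induction step more explicit by passing to a smooth projective model $\widetilde W$ of the support and invoking surjectivity of proper pushforward with $\QQ$-coefficients, which is a welcome clarification since $\hh(Z')$ is not literally defined for singular $Z'$.
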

\begin{proof}
	Let $Z$ be any variety over $\CC$. The proof proceeds by induction on the dimension of $Z$, the case of dimension zero being trivial. Let $K$ be the function field of $Z$ and $\gamma\in \CH^i(N\otimes \hh(Z))$. We write $\gamma | _{N_K}$ for the pullback of $\gamma$ to $N_K$. By assumption, there exists $\delta \in \CH^i(M_K)$ such that $(f_K)_*\delta  = \gamma | _{ N_K}$. Denote by $\bar \delta$ the closure of $\delta$ in $X\times Z$. Then $\gamma - (f_Z)_*\bar{\delta}$ is supported on $Y\times Z'$ for some closed proper subvariety $Z'\subseteq Z$ and we conclude by induction.
\end{proof}

In Section~\ref{sec:MotivesCubics} we will also include some comments on the notion of finite dimensionality in the sense of Kimura and O'Sullivan, see e.g.\ \cite[Ch.\ 4]{PureMotives}. The following key result is due to Kimura:

\begin{prop}[Kimura \cite{Kimura}] \label{prop:Kimura}
	Let $M \to N$ be a surjective morphism of motives. If $M$ is finite dimensional, then $N$ is finite dimensional.  If $M\cong M_1\oplus M_2$, then $M_1$ and $M_2$ are finite dimensional if and only if $M$ is finite dimensional.
	Moreover, if $X \to Y$ is a dominant morphism of smooth projective varieties and $\hh(X)$ is finite dimensional, then so is $\hh(Y)$.\qed
\end{prop}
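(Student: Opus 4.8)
The plan is to work in the $\QQ$-linear, pseudo-abelian, rigid symmetric monoidal category $\Mot_\CC$ and to unwind Kimura's definition of finite dimensionality. For a motive $M$ and $n \geq 1$ the symmetric group $S_n$ acts on $M^{\otimes n}$, and since we use $\QQ$-coefficients the idempotents $\frac{1}{n!}\sum_{\sigma\in S_n} \sgn(\sigma)\sigma$ and $\frac{1}{n!}\sum_{\sigma\in S_n}\sigma$ cut out the Schur functors $\wedge^n M$ and $\Sym^n M$. Recall that $M$ is \emph{evenly} (resp.\ \emph{oddly}) finite dimensional if $\wedge^n M = 0$ (resp.\ $\Sym^n M = 0$) for some $n$, and finite dimensional if $M \cong M_+ \oplus M_-$ with $M_+$ evenly and $M_-$ oddly finite dimensional. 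The first step I would record is a lemma on Schur functors: if $N$ is a direct summand of $M$, cut out by an idempotent $e \in \End(M)$, then $\wedge^n N$ is the image of the idempotent $\wedge^n e$ acting on $\wedge^n M$, hence itself a direct summand of $\wedge^n M$; consequently a summand of an evenly (resp.\ oddly) finite dimensional motive is again evenly (resp.\ oddly) finite dimensional. Dually, the standard decomposition $\wedge^n(A\oplus B) \cong \bigoplus_{i+j=n}\wedge^i A \otimes \wedge^j B$ shows that if $\wedge^a A = 0$ and $\wedge^b B = 0$ then every summand of $\wedge^{a+b-1}(A\oplus B)$ vanishes, so $A \oplus B$ is evenly finite dimensional; symmetrically for $\Sym^n$ and oddly finite dimensional motives.

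Granting this, the implication ``$M_1, M_2$ finite dimensional $\Rightarrow M_1 \oplus M_2$ finite dimensional'' is immediate: decomposing $M_i = M_i^+ \oplus M_i^-$ and regrouping, $M_1 \oplus M_2 = (M_1^+ \oplus M_2^+) \oplus (M_1^- \oplus M_2^-)$ is a sum of an evenly and an oddly finite dimensional motive by the previous paragraph. The converse --- that a direct summand $N = \im(e)$ of a finite dimensional $M = M_+ \oplus M_-$ is finite dimensional --- is the substantial point, because the idempotent $e$ need not respect the grading $M_+ \oplus M_-$. Here I would invoke Kimura's nilpotence theorem: for a finite dimensional motive $M$ the kernel of the cohomological realization $\End(M) \to \End(H^*(M))$ is a nilpotent ideal. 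I would combine it with the fact that the realization of an evenly (resp.\ oddly) finite dimensional motive is concentrated in even (resp.\ odd) cohomological degree --- so that $H^*(M_+) = H^{\mathrm{even}}(M)$ and $H^*(M_-) = H^{\mathrm{odd}}(M)$.

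Because $e$ preserves cohomological degree, its realization $\bar e$ is block-diagonal for the splitting $H^*(M) = H^*(M_+) \oplus H^*(M_-)$. Using that idempotents lift modulo a nilpotent ideal, I would lift the two diagonal blocks of $\bar e$ through the nilpotent kernels of $\End(M_\pm) \to \End(H^*(M_\pm))$ to idempotents $f_\pm \in \End(M_\pm)$, so that $f_+ \oplus f_-$ and $e$ have the same realization on $H^*(M)$. Their difference then lies in the nilpotent ideal $\ker(\End(M) \to \End(H^*(M)))$, whence $e$ and $f_+ \oplus f_-$ are conjugate idempotents and $N = \im(e) \cong \im(f_+) \oplus \im(f_-)$. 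Since $\im(f_\pm)$ is a direct summand of the evenly (resp.\ oddly) finite dimensional $M_\pm$, the first paragraph shows $N$ is finite dimensional. This proves the second assertion, and the first follows at once: as recalled before Lemma~\ref{lem:surj}, a surjective morphism $M \to N$ exhibits $N$ as a direct summand of $M$, so finite dimensionality of $M$ passes to $N$.

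For the geometric statement, let $f\colon X \to Y$ be a dominant morphism of smooth projective varieties with $\hh(X)$ finite dimensional; I would realize $\hh(Y)$ as a direct summand of $\hh(X)$ and invoke the first assertion. Choosing general very ample divisors, let $Z \subseteq X$ be a smooth complete intersection of dimension $\dim Y$ (Bertini); since $f$ is dominant and proper, the restriction $h := f|_Z$ is surjective and generically finite, say of degree $\delta$. Viewing the graph of $h$ as a cycle $\Gamma \subseteq Z\times Y \subseteq X\times Y$ of codimension $\dim X$, it defines $g := \tfrac{1}{\delta}[\Gamma] \in \Hom(\hh(X), \hh(Y))$, and writing $j$ for the inclusion of $Z$ in $X$ a direct computation gives $(g\circ f^*)_* = \tfrac1\delta h_* j^* f^* = \tfrac1\delta h_* h^* = \id$ on $\CH^*(Y_K)$ for every finitely generated $K/\CC$. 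By the generic-point and spreading-out argument used in the proof of Lemma~\ref{lem:surj}, $g\circ f^* = \id_{\hh(Y)}$, so $f^*\circ g$ is an idempotent on $\hh(X)$ with image $\hh(Y)$; thus $\hh(Y)$ is a direct summand of $\hh(X)$ and is finite dimensional. The main obstacle is the converse of the direct-sum statement: it rests entirely on Kimura's nilpotence theorem and the ensuing compatible lifting of idempotents, which is the genuinely deep input. Everything else --- the behaviour of Schur functors under summands and sums, and the multisection construction with its standard spreading-out argument --- is formal.
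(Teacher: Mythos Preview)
The paper does not prove this proposition; it is stated with a terminal \qed and attributed entirely to Kimura \cite{Kimura}. There is therefore no ``paper's own proof'' to compare against.

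That said, your argument is correct and is essentially the route taken in Kimura's original paper. You have correctly identified that the only non-formal ingredient is the nilpotence theorem (numerically trivial endomorphisms of a finite dimensional motive form a nilpotent ideal), and you use it in exactly the intended way: to straighten an arbitrary idempotent $e$ on $M=M_+\oplus M_-$ into a graded one $f_+\oplus f_-$, so that $\im(e)$ inherits an even/odd decomposition from $M_\pm$. The point that two idempotents agreeing modulo a nilpotent two-sided ideal have isomorphic images is the standard lifting-of-idempotents lemma, and your invocation of it is legitimate. For the dominant-morphism statement, slicing $X$ by general hyperplanes down to $\dim Y$ to produce a generically finite $h\colon Z\to Y$ and then using $\tfrac{1}{\deg h}[\Gamma_h]$ as a one-sided inverse to $f^*$ is the usual device; your verification $(g\circ f^*)_*=\id$ on $\CH^*(Y_K)$ for all $K$ suffices, by the same generic-point spreading argument underlying Lemma~\ref{lem:surj} (applied to the correspondence $g\circ f^*-[\Delta_Y]$), to conclude $g\circ f^*=\id_{\hh(Y)}$.

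One small point worth making explicit: the diagonal block $\bar e|_{H^*(M_+)}$ is the realization of $p_+ e\, i_+\in\End(M_+)$, which need not itself be idempotent; what you are really using is that its image in $\End(H^*(M_+))$ is idempotent and hence lifts to an idempotent $f_+$ modulo the nilpotent kernel. This is implicit in what you wrote but would be clearer if stated.
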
 

To conclude this section, observe that the Chow motive of a hyperkähler variety is in fact a birational invariant. Indeed, for two birational hyperkähler varieties $X$ and $X'$ one can always find families $\kx$ and $\kx'$ over a smooth quasi-projective curve $C$, which are isomorphic away from a point $0\in C$ with central fibres $X=\kx_0$ resp.\ $X'=\kx'_0$ (cf.\ \cite[Thm.\ 10.12]{HuyCHM}, \cite[Prop.\ 2.1]{Ulrike}). This can be used to show that their Chow rings $\CH^*(X)$ and $\CH^*(X')$ are isomorphic~\cite[Thm.\ 3.2]{Ulrike}. The same proof also shows that their Chow motives are isomorphic, see also \cite[Sec.\ 1.6]{ShenVial}:
\begin{prop} \label{prop:Ulrike}
	Let $X$ and $X'$ be birational hyperkähler varieties. There is an isomorphism of Chow motives \[\pushQED{\qed} \hh(X) \cong \hh(X').\qedhere \popQED\]
\end{prop}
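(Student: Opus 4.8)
The plan is to realise $X$ and $X'$ as the central fibres of degenerating families and to build the isomorphism from the closure of the graph of the birational map; the hyperkähler hypothesis enters only to make such families available. By \cite[Thm.\ 10.12]{HuyCHM} and \cite[Prop.\ 2.1]{Ulrike} there are smooth projective families $\kx \to C$ and $\kx' \to C$ over a smooth quasi-projective curve $C$, a point $0 \in C$ with $\kx_0 \cong X$ and $\kx'_0 \cong X'$, and an isomorphism $\Phi\colon \kx|_U \isomor \kx'|_U$ over $U = C \setminus \{0\}$ commuting with the projections to $C$. Writing $n = \dim X$, I would let $\bar\Gamma \subseteq \kx \times_C \kx'$ be the closure of the graph of $\Phi$, an integral subvariety flat over $C$, and let $\bar\Gamma^{t} \subseteq \kx' \times_C \kx$ be its transpose, i.e.\ the closure of the graph of $\Phi^{-1}$. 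Their central fibres define correspondences $Z := i_0^{!} \bar\Gamma = [\bar\Gamma_0] \in \CH^n(X \times X')$ and $Z^{t} := i_0^{!}\bar\Gamma^{t} \in \CH^n(X' \times X)$.

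To prove the proposition it then suffices to check that $Z$ and $Z^{t}$ are mutually inverse isomorphisms of motives, that is $Z^{t} \circ Z = [\Delta_X]$ in $\CH^n(X \times X)$ and, symmetrically, $Z \circ Z^{t} = [\Delta_{X'}]$. The strategy is to perform the composition relatively over $C$ and then specialise to $0$. On the smooth total space $\kx \times_C \kx' \times_C \kx$ one forms the relative composition $\bar\Gamma^{t} \circ_C \bar\Gamma \in \CH^n(\kx \times_C \kx)$. Over the generic point $\eta$ of $C$ the morphism $\Phi_\eta$ is an isomorphism, so its graph composed with the graph of its inverse is the relative diagonal; hence the difference $V := \bar\Gamma^{t} \circ_C \bar\Gamma - \Delta_{\kx/C}$ restricts to zero over $\eta$ and, after shrinking $C$, is supported on the central fibre $X \times X \subseteq \kx \times_C \kx$.

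Now I would invoke the specialisation homomorphism $\sigma = i_0^{!}$ associated with the regular embedding of the central fibre, following the line of argument in \cite[Thm.\ 3.2]{Ulrike} and \cite[Sec.\ 1.6]{ShenVial}. Since $\sigma$ is compatible with proper pushforward, flat pullback and with the intersection product on schemes smooth over $C$, it commutes with the formation of relative correspondences, and because $\bar\Gamma$, $\bar\Gamma^{t}$ and $\Delta_{\kx/C}$ are flat over $C$ one obtains $\sigma(\bar\Gamma^{t} \circ_C \bar\Gamma) = Z^{t} \circ Z$ and $\sigma(\Delta_{\kx/C}) = [\Delta_X]$. On the other hand $\sigma(V) = i_0^{!} V = 0$, because the normal bundle of the fibre $X \times X$ over the smooth point $0 \in C$ is trivial, so the Gysin pullback of a cycle already supported on that fibre vanishes. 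Combining these identities gives $Z^{t} \circ Z = [\Delta_X]$, and interchanging the roles of $\kx$ and $\kx'$ yields $Z \circ Z^{t} = [\Delta_{X'}]$; hence $Z$ induces the desired isomorphism $\hh(X) \cong \hh(X')$.

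The main obstacle I anticipate is the compatibility of the specialisation map with the composition of correspondences: one has to verify that the pushforward, pullback and intersection operations defining $\bar\Gamma^{t} \circ_C \bar\Gamma$ sit inside the fibre squares and smoothness hypotheses under which Fulton's specialisation results apply, so that $\sigma$ can be commuted past each of them. Everything else is either geometric input that is already available for birational hyperkähler varieties (the existence of the families $\kx, \kx' \to C$) or a formal consequence of the triviality of the normal bundle of a fibre over a smooth curve.
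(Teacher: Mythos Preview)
Your proposal is correct and follows essentially the same approach that the paper sketches in the paragraph preceding the proposition (and attributes to \cite[Thm.\ 3.2]{Ulrike} and \cite[Sec.\ 1.6]{ShenVial}): deform $X$ and $X'$ in families that are isomorphic away from the central fibre, use the closure of the graph of this isomorphism as a relative correspondence, and specialise. Your identification of the key technical point---that Fulton's specialisation map $i_0^!$ commutes with the operations defining composition of correspondences, together with the vanishing $i_0^! i_{0*}W = c_1(N)\cdot W = 0$ coming from triviality of the normal bundle of the central fibre---is exactly what makes the argument go through, and the paper itself invokes the same compatibilities later in the proof of Theorem~\ref{thm:main2}.
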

Our result therefore also applies to any hyperkähler variety which is birational to a moduli space as in Theorem~\ref{thm:main1}. 
\section{Motives of moduli spaces of stable sheaves} \label{sec:MotivesHyper}
\subsection{Moduli spaces of stable sheaves on a K3 surface}\label{subsec:MotivesModuli}
This section contains the proof of Theorem~\ref{thm:main1}. Let $S$ be a projective (twisted) K3 surface or an abelian surface. Assume that $M$ is a smooth projective moduli space of stable (twisted) sheaves on $S$. See Remark~\ref{rem:MotiveModuli} for comments on the case of a moduli space of $\sigma$-stable objects.
\begin{proof}[Proof of Theorem~\ref{thm:main1}]
	Let $E$ be a quasi-universal sheaf on $M\times S$ and $F$ its transpose on $S \times M$. We use the following notation for the projections:
	\begin{center} \begin{tikzcd}
			&M\times S \times M \ar[dl,"\pi_{12}",swap] \ar[d,"\pi"] \ar[dr,"\pi_{23}"]& \\
			M\times S &M\times M& S\times M 
		\end{tikzcd} \end{center} and $\ke=\pi_{12}^*(E)$, $\kf=\pi_{23}^*(F)$ for the pullbacks.
		Consider the relative Ext sheaves $\sExt_\pi^i(\ke,\kf) = \R^i(\pi_* \circ \sHom ) (\ke,\kf)$ and define 
		\[ [\sE]= \sum (-1)^i [\sExt_\pi^i(\ke,\kf)]  \in \K(M\times M). \]
		Note that in our case only $\sExt_\pi^1(\ke,\kf)$ and $\sExt_\pi^2(\ke,\kf)$ are non-zero. A computation of the Chern classes due to Markman~\cite[Thm.\ 1]{Markman} yields \[ \label{eq:1} c_m(-[\sE]) = [\Delta_M] \in \CH^m(M\times M) \tag{1}, \] where $m$ is the dimension of $M$. In fact, Lemma 4 of loc.\ cit.\ also applies to moduli spaces of stable twisted sheaves.
		\smallskip
		
		Consider the Chow groups $\CH^*(M\times M)_{\QQ}$ as a unital ring with convolution of cycles and unit given by the diagonal. Define the following two-sided ideal generated by correspondences which factor through some power of $S$:
		\[ I = \langle  \beta \circ \alpha \mid \alpha \in \CH^*(M\times S^k)_{\QQ}, \beta \in \CH^*(S^k\times M)_{\QQ}, k \geq 1 \rangle \subseteq \CH^*(M\times M)_{\QQ}. \] Note that $I$ is closed under intersection products. Indeed, let $\alpha \in \CH^*(M\times S^k)_{\QQ}$, $\beta \in \CH^*(S^k\times M)_{\QQ}$, $\alpha' \in \CH^*(M\times S^{k'})_{\QQ}$, $\beta' \in \CH^*(S^{k'} \times M)_{\QQ}$ and denote by $\tau$ the involution of $M\times M\times M\times M$ interchanging the middle two factors:
		\begin{align*}(\beta \circ \alpha) \cdot (\beta'\circ \alpha')&= [\hskip 0.12em \ltrans \, \Gamma_{\Delta_{M\times M}}]_*(\beta \circ \alpha \times \beta'\circ \alpha') = [\hskip 0.12em \ltrans \, \Gamma_{\Delta_{M\times M}}]_*\circ \tau_*(\beta\times \beta' \circ \alpha\times \alpha')\\ &=[\hskip 0.12em \ltrans \,\Gamma_{\tau\circ \Delta_{M\times M}}]_*(\beta\times \beta' \circ \alpha\times\alpha')=\left([\hskip 0.12em \ltrans \,\Gamma_{\Delta_M}] \times [\hskip 0.12em \ltrans \,\Gamma_{\Delta_M} ]\right)_*(\beta\times \beta' \circ \alpha\times\alpha')\\ &= \left([\hskip 0.12em \ltrans \, \Gamma_{\Delta_M}]\circ\beta\times \beta'\right) \circ \left( \alpha\times\alpha' \circ [\Gamma_{\Delta_M}] \right). \end{align*}
		The last equality follows from Lieberman's Lemma, cf.\ \cite[Prop.\ 2.1.3]{PureMotives}. We obtain a correspondence which factors through $S^{k+k'}$, so it is contained in $I$. We will conclude by showing that the class of the diagonal is contained in $I$.\smallskip
		
		A Grothendieck--Riemann--Roch computation gives:
		\begin{align*} \label{eq:2}
		\ch\Big( - [\sE] \Big) &= - \ch \Big( \pi_! [\R\sHom(\ke,\kf)] \Big) = - \pi_* \Big( \ch [\R\sHom(\ke, \kf)] \cdot  \pi_2^* \td(S) \Big) \\ &= - \pi_* \Big( \pi_{12}^* \ch(E^{\vee})\cdot \pi_{23}^* \ch(F)\cdot \pi_2^* \td(S) \Big), \tag{2} \end{align*}
		where $E^{\vee}=\R\sHom(E,\ko_{M\times S})$ denotes the derived dual of $E$ and $\pi_2$ is the projection to $S$. Let $\alpha = \oplus \alpha^i= \ch(E^{\vee})\cdot  \pi_2^* \sqrt{\td(S)}$, $\beta= \oplus\beta^i =\ch(F)\cdot  \pi_2^* \sqrt{\td(S)}$ and $n\in \NN$. Considering only the codimension $n$ part of \eqref{eq:2} we find that the $n$-th Chern character is contained in $I$:
		\[ \ch_n \Big(-[\sE] \Big)= - \sum_{i+j=n+2} \pi_*(\pi_{12}^*\alpha^i \cdot \pi_{23}^*\beta^j) \in I.\] 
		The codimension $n$ part of the Chern character is given as a sum  $\displaystyle{\frac{(-1)^{n-1}}{(n-1)!}c_n + p}$, where $p$ is a polynomial in the Chern classes of degree less than $n$. Note that $c_1=\ch_1$ is contained in $I$ and, therefore, also $c_2=\frac{1}{2} c_1^2-\ch_2\in I$. It follows iteratively that $c_n\in I$ for all $n$ and therefore $[\Delta_M] \in I$ by \eqref{eq:1}. Thus, there are cycles $\gamma_i \in \CH^{e_i}(M\times S^{k_i})_{\QQ}$, $\delta_i \in \CH^{d_i}(S^{k_i} \times M)_{\QQ}$, for some $k_i\in \NN$, such that
		\[\label{eq:3} [\Delta_M] = \sum \delta_i \circ \gamma_i \in \CH^m(M\times M)_{\QQ}. \tag{3}\]
		Let $\delta = \bigoplus \delta_i$ viewed as a morphism of motives $\bigoplus \hh(S^{k_i})(n_i) \to \hh(M)$ with $n_i=d_i-2k_i$. Equation \eqref{eq:3} asserts that $\gamma = \bigoplus \gamma_i$ defines a right inverse for $\delta$, i.e.\ the following composition is the identity:
		\begin{center}\begin{tikzcd} \hh(M) \ar[r, "\gamma"] & \bigoplus \hh(S^{k_i})(n_i) \ar[r,"\delta"] & \hh(M). \end{tikzcd} \end{center}
		Hence, $\hh(M)$ is a direct summand of $\bigoplus \hh(S^{k_i})(n_i)$.
		
		Moreover, we obtain a bound for the exponents $k_i$. Consider the filtration $I_k$ of $I$ generated by correspondences which factor through $S^l$ with $l \leq k$. With the above notation we have $\ch_n \in I_1$ for all $n$ and $I_k \cdot I_{k'} \subseteq I_{k+k'}$. Thus $k_i\leq m=\dim M$ for all $i$.
	\end{proof}
	\begin{remark} \label{rem:MotiveModuli}
		The above argument also works for smooth projective moduli spaces $M_\sigma(v)$ of $\sigma$-stable objects for a generic stability condition $\sigma$. It was observed in \cite{MaZh} that Markman's computation of the Chern class can be carried out similarly in this case.
	\end{remark}
	\begin{cor}
		Let $S$ and $M$ be as above. If $\hh(S)$ is finite dimensional, then $\hh(M)$ is finite dimensional as well.\qed
	\end{cor}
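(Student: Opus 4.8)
The plan is to derive the statement as a short formal consequence of Theorem~\ref{thm:main1} together with the closure properties of finite dimensional motives. By Theorem~\ref{thm:main1}, $\hh(M)$ is a direct summand of $N = \bigoplus \hh(S^{k_i})(n_i)$. By the second assertion of Proposition~\ref{prop:Kimura}, a motive is finite dimensional if and only if all of its direct summands are, so it suffices to show that $N$ is finite dimensional; and since a finite direct sum of finite dimensional motives is finite dimensional, I am reduced to proving that each summand $\hh(S^{k_i})(n_i)$ is finite dimensional.

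First I would observe that the Tate twist is harmless: the Lefschetz motive $\LL$ is finite dimensional, and finite dimensionality is preserved under tensor products, so twisting by $(n_i)$ does not affect it. Thus $\hh(S^{k_i})(n_i)$ is finite dimensional exactly when $\hh(S^{k_i})$ is. Using the Künneth isomorphism $\hh(S^{k_i}) \cong \hh(S)^{\otimes k_i}$ for Chow motives, the finite dimensionality of $\hh(S^{k_i})$ follows from that of $\hh(S)$, once more by stability under tensor products. Assembling these steps yields the claim.

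The only ingredient used beyond the literal statement of Proposition~\ref{prop:Kimura} is that finite dimensionality is stable under tensor products; this is a foundational theorem of Kimura (see \cite{Kimura}, \cite[Ch.\ 4]{PureMotives}) and is really the sole nontrivial point, everything else being formal manipulation of summands and twists. I therefore do not expect any genuine obstacle here: the substance of the corollary is already contained in Theorem~\ref{thm:main1}, and what remains is purely categorical.
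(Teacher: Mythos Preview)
Your proposal is correct and matches the paper's treatment: the corollary is stated with an immediate \qed\ and no written proof, since it follows formally from Theorem~\ref{thm:main1} together with the standard closure properties of finite dimensional motives (Proposition~\ref{prop:Kimura} and stability under tensor products and Tate twists). There is nothing to add.
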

	\begin{remark} \label{rem:RF}
		We expect also that $\hh(S)$ is motivated by $\hh(M)$ (see the introduction). This holds for example in the case of a Hilbert scheme. For fine moduli spaces it would follow from a conjecture of Addington \cite{AddRF}: A universal sheaf induces a Fourier--Mukai transform $F\colon \Db(S) \to \Db(M)$ with right adjoint $R$. Addington conjectured that the composition of $F$ and $R$ splits as follows:
		\[ R \circ F \cong \id \oplus \id[-2] \oplus \ldots \oplus \id[-2n+2]. \] 
		If $v$ and $w$ are the Mukai vectors of the Fourier--Mukai kernels, we obtain:
		\[  [\Delta_S] = \frac{1}{n} v \circ w \in \CH^2(S \times S)_\QQ. \] 
		It follows as above that $\hh(S)$ is a direct summand of $\bigoplus \hh(M)(n_i)$ for some $n_i \in \ZZ$.
	\end{remark}

	\subsection{The Fano variety of lines} \label{subsec:MotiveFano}
	We provide a short proof of Corollary~\ref{cor:main3}. Let $X$ be a cubic fourfold and $F$ its Fano variety of lines. The Chow groups and motive of $F$ were investigated in detail by Shen and Vial \cite{ShenVial}. They studied Fourier transforms inducing a (particularly interesting) decomposition of the Chow ring, similar to the case of an abelian variety. The relation between the Chow groups of $F$ and $X$ given via the universal line (viewed as a correspondence) has been elucidated as well. We refrain from going into the details and recommend loc.\ cit.\ for further reading.
	
	\begin{prop}
		Let $X$ be a cubic fourfold and $F$ its Fano variety of lines. Then the transcendental motive $\ttt(X)$ is a direct summand of $\hh(F)(-1)$. In particular, the motive of $X$ is contained in $\Mot(F)$.
	\end{prop}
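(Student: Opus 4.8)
The plan is to realise $\ttt(X)$ as a direct summand of $\hh(F)(-1)$ by means of the universal line, and then to deduce the inclusion $\Mot(X)\subseteq\Mot(F)$ from the refined Chow--Künneth decomposition. Let $P\subseteq F\times X$ be the incidence variety $\{(\ell,x)\mid x\in\ell\}$, of dimension $5$, and write $[P]\in\CH^3(F\times X)_\QQ$ for its class. Using the refined Chow--Künneth decomposition of $X$ and the self-dual decomposition of $\hh(F)$ constructed by Shen--Vial \cite{ShenVial}, I would cut $[P]$ down by the transcendental projectors to obtain two morphisms of motives $\Psi\colon \ttt(X)\to\hh(F)(-1)$ and $\Phi\colon \hh(F)(-1)\to\ttt(X)$. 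Here $\Psi$ is induced by $[P]$ read as a correspondence from $X$ to $F$, while $\Phi$ is induced by $[P]$ read from $F$ to $X$ after intersecting with a suitable power of the Plücker polarization, so that the codimensions (hence the Tate twists) match.

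Next I would record the cohomological input. By the theorem of Beauville--Donagi the correspondence $[P]$ induces an isomorphism of Hodge structures on primitive cohomology $H^4_{\mathrm{prim}}(X)\iso H^2_{\mathrm{prim}}(F)$, and in particular an isomorphism on the transcendental parts $H^4_{\tr}(X)\iso H^2_{\tr}(F)$ (the transcendental lattice is contained in the primitive one, as the polarization class is algebraic). The same computation shows that the two incidence maps are mutually inverse on transcendental cohomology up to a nonzero rational factor coming from the self-intersection numbers of $P$. After rescaling $\Phi$ by this factor, the endomorphism $\Phi\circ\Psi$ of $\ttt(X)$ acts as the identity on $H^*(\ttt(X))=H^4_{\tr}(X)$.

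The main obstacle is to upgrade this cohomological identity to an identity of Chow motives, i.e.\ to pass from homological to rational equivalence on the transcendental part. Here I would use that $\ttt(X)$ is a motive of K3 type: the Hodge structure $H^4_{\tr}(X)$ has the shape of the transcendental lattice of a K3 surface, so the structural results for transcendental motives of Kahn--Murre--Pedrini type apply (compare the argument in \cite{BolPed}), and every homologically trivial endomorphism of $\ttt(X)$ is nilpotent. Consequently $\Phi\circ\Psi=\id+N$ with $N$ nilpotent, hence $\Phi\circ\Psi$ is an automorphism of $\ttt(X)$. Replacing $\Phi$ by $(\Phi\circ\Psi)^{-1}\circ\Phi$ then yields a retraction, exhibiting $\ttt(X)$ as a direct summand of $\hh(F)(-1)$.

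Finally, for the last assertion recall the refined decomposition
\[ \hh(X) \cong \one \oplus \LL \oplus (\LL^2)^{\oplus\rho_2}\oplus\ttt(X)\oplus\LL^3\oplus\LL^4. \]
All Tate summands lie in $\Mot(F)$, and since $\Mot(F)$ is a pseudo-abelian tensor subcategory containing $\hh(F)$ and $\LL$, it contains $\hh(F)(-1)=\hh(F)\otimes\LL$ together with its direct summand $\ttt(X)$. Hence $\hh(X)\in\Mot(F)$, as claimed. (The reverse inclusion $\Mot(F)\subseteq\Mot(X)$ is precisely the content of the Galkin--Shinder--Laterveer formula recalled above, which realises $\hh(F)(2)$ as a direct summand of $\hh(X^{[2]})$.) I expect the normalisation in the second step and the nilpotence input in the third to be the genuine crux; the former is the concrete computation, the latter the conceptual ingredient that makes the cohomological isomorphism lift to motives.
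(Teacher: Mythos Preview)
Your argument has a genuine gap at the nilpotence step. You assert that every homologically trivial endomorphism of $\ttt(X)$ is nilpotent because ``$\ttt(X)$ is a motive of K3 type,'' but this is precisely a case of the Kimura/Voevodsky nilpotence conjecture that is \emph{not} known for an arbitrary cubic fourfold. The Kahn--Murre--Pedrini results concern the construction of refined Chow--K\"unneth projectors, not the nilpotence of homologically trivial self-correspondences, and the arguments in \cite{BolPed} you allude to are carried out for \emph{special} cubics (where extra geometric input is available), not for a general $X$. Without finite-dimensionality of $\hh(X)$ or some replacement, the passage from ``$\Phi\circ\Psi$ is the identity on cohomology'' to ``$\Phi\circ\Psi$ is an automorphism of $\ttt(X)$'' is unjustified.

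The paper's proof sidesteps this entirely and is both shorter and unconditional. It uses the surjectivity criterion of Lemma~\ref{lem:surj}: a morphism of motives is split surjective as soon as it is surjective on Chow groups after every finitely generated field extension of $\CC$. One takes the single morphism
\[
f\colon \hh(F)(-1)\xrightarrow{\ L\ }\hh(X)\xrightarrow{\ \pi_X^{4,\tr}\ }\ttt(X)
\]
induced by the universal line $L\in\CH^3(F\times X)$. Over any such extension $K$ the only nonzero Chow group of $\ttt(X_K)$ is $\CH^3(\ttt(X_K))\cong\CH_1(X_K)_{\hom,\QQ}$, and the latter is \emph{universally} generated by lines (this is the geometric input, from Paranjape/Shen). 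Hence $f_K$ is surjective for every $K$, so $f$ admits a right inverse and $\ttt(X)$ is a direct summand of $\hh(F)(-1)$. No cohomological normalisation, no Shen--Vial decomposition of $\hh(F)$, and no nilpotence hypothesis are needed. Your concluding paragraph deducing $\hh(X)\in\Mot(F)$ from the refined Chow--K\"unneth decomposition is fine and matches the paper.
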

	
	\begin{proof}
		The universal line $L\in \CH^3(F\times X)$ induces a morphism $f$ of motives:
		\begin{center} \begin{tikzcd}
				\hh(F)(-1) \ar[r, "L"] & \hh(X) \ar[r, "\pi_X^{4,\tr}"] & \ttt(X). \end{tikzcd}
		\end{center}
		Let $K$ be any finitely generated field extension of $\CC$. The only non-trivial rational Chow group of $\ttt(X_K)$ is $\CH^3(\ttt(X_K))\cong \CH_1(X_K)_{\hom,\QQ}$. Indeed, choose an embedding of $K$ into the complex numbers and denote by $Y$ the base change of $X_K$ to $\CC$, which is a smooth complex cubic fourfold. It is well known that the base change map $\CH^i(\ttt(X_K)) \to \CH^i(\ttt(Y))$ induced by a field extension is injective up to torsion, see e.g.\ \cite[Lem.\ 1A.3]{Bloch}. Now use that $\CH^i(\ttt(Y))$ vanishes for $i \neq 3$.
		The Chow group of one-cycles is universally generated by lines (cf.\ \cite{Mingmin2}) and the assertion thus follows from Lemma~\ref{lem:surj}.
	\end{proof}
	%ToDo: The LLSvS Eightfold

\section{Motives of special cubic fourfolds} \label{sec:MotivesCubics}
\subsection{Special cubic fourfolds}\label{subsec:SpecialCubics} Recall that cubic fourfolds admitting a labelling of discriminant $d$ form a divisor $\kc_d \subseteq \kc$ inside the moduli space of smooth complex cubic fourfolds, see \cite{HassettThesis}. The existence of an associated K3 surface (in a suitable sense) can be characterized solely in terms of $d$. The following numerical conditions have been introduced over the past years (we use the notation of Addington~\cite{Add}):

\begin{align*}
&\exists a,n \in \ZZ : a^2d=2n^2+2n+2, &&(\ast {\ast} \ast) \\
&\exists n \in \ZZ : d \mid 2n^2+2n+2, &&(\ast \ast ) \\
&\exists k,d_0 \in \ZZ : \text{$d_0$ satisfies }(\ast \ast ) \text{ and }  d=k^2d_0. &&(\ast \ast' )
\end{align*}

There are (strict) inclusions of subsets inside the moduli space $\kc$ of cubic fourfolds:
\[ \bigcup\limits_{(\ast {\ast} \ast)}\kc_d\quad \subseteq \quad \bigcup\limits_{(\ast \ast)}\kc_d \quad \subseteq \quad \bigcup\limits_{(\ast \ast')}\kc_d. \]
A cubic fourfold admits a labelling of discriminant $d$ satisfying $(\ast \ast' )$ if and only if there exist a K3 surface $S$, a Brauer class $\alpha\in\Br(S)$ and a Hodge isometry $\widetilde H(S,\alpha,\ZZ) \cong \widetilde H(\ka_X,\ZZ)$ \cite[Thm.\ 1.3]{HuyK3cat}. In this case, we prove that there is an isomorphism of Chow motives $\ttt(S)(1)\cong \ttt(X)$. This generalizes work of Bolognesi, Pedrini \cite{BolPed}, and Laterveer \cite{Lat17}. In \cite{BolPed}, the authors obtained such an isomorphism in the case when $F(X) \cong S^{[2]}$. Injectivity has been proven in \cite{Lat17} for cubic fourfolds invariant under a certain involution. Both cases are instances of Theorem~\ref{thm:main2}, see the comments in Section~\ref{subsec:examples}. We start with a well known fact:

\begin{lem} \label{divisible}
	Let $S$ be a projective K3 surface and $X$ a cubic fourfold. Then $\CH_0(S)_{\hom}$ and $\CH_1(X)_{\hom}$ are divisible and torsion-free.
\end{lem}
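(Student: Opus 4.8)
The plan is to treat the two groups separately, reducing each to the divisibility and torsion-freeness of the group of homologically trivial zero-cycles on a smooth projective variety with trivial Albanese. Recall first that for a zero-cycle on a connected variety homological triviality is equivalent to degree zero, so $\CH_0(S)_{\hom}=\CH_0(S)_{\deg 0}$, and that a group which is both divisible and torsion-free is exactly a $\QQ$-vector space.

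\textbf{The surface $S$.} For divisibility I would use that any two closed points $p,q\in S$ lie on an irreducible curve $C\subseteq S$ (cut $S$ by general hyperplanes through $p$ and $q$); let $\nu\colon \tilde C\longrightarrow S$ be its normalization and lift $p-q$ to a degree-zero cycle on $\tilde C$. Since $\CH_0(\tilde C)_{\deg 0}\cong \mathrm{Jac}(\tilde C)$ is a divisible group, $p-q=\nu_*(\tilde p-\tilde q)$ is $N$-divisible in $\CH_0(S)$ for every $N\geq 1$; as such differences generate $\CH_0(S)_{\deg 0}$, the whole group is divisible. For torsion-freeness I would invoke Roitman's theorem, which identifies the torsion of $\CH_0(S)_{\deg 0}$ with the torsion of $\operatorname{Alb}(S)$. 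For a K3 surface $H^1(S,\ko_S)=0$, hence $\operatorname{Alb}(S)=0$ and $\CH_0(S)_{\hom}$ is torsion-free. The very same two arguments apply verbatim to any smooth projective variety with trivial Albanese, in particular to the Fano variety $F$ of lines of a cubic fourfold, which is hyperkähler and thus satisfies $H^1(F,\ko_F)=0$; I record for later use that $\CH_0(F)_{\hom}$ is divisible and torsion-free.

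\textbf{The cubic fourfold $X$.} The starting point is that $\CH_1(X)$ is universally generated by lines \cite{Mingmin2}, so the universal line $L\in\CH^3(F\times X)$ induces a surjective pushforward $L_*\colon \CH_0(F)\longrightarrow \CH_1(X)$. Since $F$ is connected all lines are homologous in $H^6(X,\ZZ)\cong\ZZ$, so a combination of lines is homologically trivial precisely when its coefficients sum to zero; consequently $L_*$ restricts to a surjection $\CH_0(F)_{\hom}\longrightarrow \CH_1(X)_{\hom}$. Divisibility of $\CH_1(X)_{\hom}$ follows immediately, being the image of the divisible group $\CH_0(F)_{\hom}$.

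\textbf{Torsion-freeness and the main obstacle.} Torsion-freeness is \emph{not} inherited through a mere surjection, and this is where the argument must do real work. I would upgrade $L_*$ to a splitting by appealing to the detailed study of the incidence correspondence in Shen--Vial \cite{ShenVial}: their computations realize $\CH_1(X)_{\hom}$ as a direct summand of $\CH_0(F)_{\hom}$, equivalently exhibit a correspondence $Q$ on $X\times F$ with $L_*\circ Q_*=\mathrm{id}$ on $\CH_1(X)_{\hom}$. A subgroup of a torsion-free divisible group is again torsion-free and divisible, which closes the argument. The main obstacle is precisely to secure such a splitting \emph{integrally}, since much of the Fourier-decomposition machinery is phrased with rational coefficients and one must verify that the relevant identity persists over $\ZZ$. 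An alternative route, bypassing $F$, is a generalized Roitman theorem for one-cycles: the discussion above already shows $\CH_1(X)_{\hom}=\CH_1(X)_{\mathrm{alg}}$, and since a cubic fourfold has no odd cohomology the associated intermediate Jacobian $J^5(X)$ vanishes, so any torsion class would have trivial Abel--Jacobi image; the delicate point there is the injectivity of the Abel--Jacobi map on torsion in codimension three, which is exactly the statement one would need to isolate from the literature.
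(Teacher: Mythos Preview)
Your treatment of $\CH_0(S)_{\hom}$ and the divisibility of $\CH_1(X)_{\hom}$ matches the paper's proof exactly: curves through two points plus the Jacobian for divisibility, Roitman for torsion-freeness, then the same argument applied to the hyperk\"ahler Fano variety $F$, and divisibility of $\CH_1(X)_{\hom}$ as the image of a divisible group under $L_*$.

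The gap is in the torsion-freeness of $\CH_1(X)_{\hom}$, and you have correctly located it yourself. You propose to produce an \emph{integral} section of $L_*$ from the Shen--Vial machinery, but you do not actually extract one, and you note that their Fourier decomposition is set up with rational coefficients; your alternative via a Roitman-type statement in codimension three likewise leaves the crucial injectivity on torsion unproved. So as written the argument does not close.

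The paper sidesteps this entirely by using a weaker input from Shen--Vial: not a splitting, but the divisibility of the \emph{kernel} $\KE(L_*)\subseteq \CH_0(F)_{\hom}$ \cite[Thm.\ 20.5, Lem.\ 20.6]{ShenVial}. This suffices because $\CH_0(F)_{\hom}$ is already known to be torsion-free and divisible, hence a $\QQ$-vector space; a divisible subgroup of a torsion-free group is again torsion-free, so $\KE(L_*)$ is a $\QQ$-subspace, and the quotient $\CH_1(X)_{\hom}\cong \CH_0(F)_{\hom}/\KE(L_*)$ is then automatically a $\QQ$-vector space, in particular torsion-free. Concretely: if $n\cdot \gamma=0$ in the quotient, lift $\gamma$ to $\tilde\gamma\in\CH_0(F)_{\hom}$; then $n\tilde\gamma\in\KE(L_*)$, write $n\tilde\gamma=n\kappa$ with $\kappa\in\KE(L_*)$ by divisibility of the kernel, and conclude $\tilde\gamma=\kappa$ by torsion-freeness upstairs. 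This is both simpler and avoids any integrality check on the Fourier transform.
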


\begin{proof}
	Divisibility of $\CH_0(S)_{\hom}$ is well known and follows easily by constructing a curve through any two given points and using the Jacobian of the normalization. The theorem of Rojtman \cite{Rojtman} implies that this group is torsion-free. 
	Let $F$ be the Fano variety of lines in $X$. It is a hyperkähler variety, so its first Betti number vanishes and it follows as above that  $\CH_0(F)_{\hom}$ is divisible and torsion-free. The universal line $L$ induces a surjection \begin{center}
		\begin{tikzcd} \CH_0(F)_{\hom} \ar[r,"L_*"] &\CH_1(X)_{\hom}, \end{tikzcd} \end{center}
	hence the assertion follows from the divisibility of $\KE(L_*)$ which was proven by Shen and Vial~\cite[Thm.\ 20.5, Lem.\ 20.6]{ShenVial}.
\end{proof}
\begin{proof}[Proof of Theorem \ref{thm:main2}]
	Since $\CC$ is a universal domain, it suffices to prove the isomorphism on Chow groups. By a variant of Manin's identity principle (cf. \cite[Lem.\ 1]{GG}, \cite[Lem.\ 3.2]{Vial} or \cite[Lem.\ 4.3]{Pedrini}) this implies $\ttt(S)(1) \cong \ttt(X)$.
	The results of Addington--Thomas \cite{AT} and Huybrechts \cite{HuyK3cat} imply that there is an exact equivalence $\Db(S) \simeq \ka_X$ (resp.\ $\Db(S,\alpha) \simeq \ka_X$) if $X\in \kc_d$ is generic\footnotemark and we consider this case first.
	\footnotetext{At the moment, an equivalence $\Db(S) \cong \ka_X$ (resp.\ $\Db(S,\alpha) \cong \ka_X$) is established only for generic $X \in \kc_d$. This gap is expected to be filled soon and would make the last step of the proof superfluous (see the upcoming work of Bayer, Lahoz, Macr{\`i}, Nuer, Perry, Stellari \cite{BLMNPS}).}
	Assume that $\alpha = 1$, i.e.\ $d$ satisfies ($\ast$$\ast$). Consider the composition $\Phi$ of an exact equivalence $\Db(S) \simeq \ka_X$ and the inclusion $\ka_X \subseteq \Db(X)$. By \cite{Orlov}, this functor is of Fourier--Mukai type, i.e.\ there is a complex $\ke \in \Db(S \times X)$, such that for all $\kg \in \Db(S)$:
	\[ \Phi(\kg) \cong p_*(\ke \otimes q^*(\kg)), \]
	where $p$ and $q$ are the projections. It follows that the left adjoint to $\Phi$ is of Fourier--Mukai type as well, say with kernel $\kf$. 
	Let $v= \ch(\ke) \cdot \sqrt{\td(S \times X)}$ (resp.\ $w$) be the Mukai vector of $\ke$ (resp.\ $\kf$). It is an algebraic cycle with $\QQ$-coefficients on $S \times X$ which needs not be of pure dimension. Denote by $v^i$ (resp.\ $w^i$) its codimension $i$ part. Since $\Phi$ is fully faithful, the convolution $w \circ v$ is rationally equivalent to the class of the diagonal $[\Delta_S]$ on $S \times S$. More precisely, the following equality holds in $\CH^2(S\times S)_\QQ$:
	\begin{equation} \label{eq:4} [\Delta_S] = w^0\circ v^6 + w^1\circ v^5 + w^2\circ v^4 + w^3\circ v^3 + w^4\circ v^2 + w^5\circ v^1 + w^6\circ v^0. \tag{4} \end{equation}
	
	Recall that the homologically trivial part of the Chow groups of $S$ and $X$ are concentrated in codimension two and three, respectively. The induced action of $v$ on Chow groups is compatible with the action on cohomology. Thus, $w^3 \circ v^3$ is the only summand on the right hand side of \eqref{eq:4} acting non-trivially on $\CH_0(S)_{\hom,\QQ}$, i.e.\ the following composition is the identity:
	
	\[ \begin{tikzcd} \CH_0(S)_{\hom,\QQ} \ar[r, "v^3_*"] & \CH_1(X)_{\hom,\QQ} \ar[r, "w^3_*"] & \CH_0(S)_{\hom,\QQ}.
	\end{tikzcd} \] This proves injectivity of $v^3_*$.
	For the surjectivity consider the following diagram:
	
	\[\begin{tikzcd}
	\K(S)_\QQ \ar[d,  "v"] \ar[r,"\sim"] & \K(\ka_X)_\QQ \ar[r] \ar[dr, "\phi"] & 	\K(X)_\QQ \ar[d, "v"] \\
	\CH^*(S)_\QQ \ar[rr, "v_*"]  && \CH^*(X)_\QQ \\
	\CH_0(S)_{\hom,\QQ} \ar[rr, "{v^3_*}"] \ar[u, hook] && \CH_1(X)_{\hom,\QQ}. \ar[u, hook]
	\end{tikzcd} \]
	Commutativity of the middle diagram follows from the Grothendieck--Riemann--Roch Theorem. It suffices to show that the image of $\phi \colon \K(\ka_X)_\QQ \to \CH^*(X)_{\QQ}$ contains $\CH_1(X)_{\hom,\QQ}$. Indeed, this would imply that any $\beta\in\CH_1(X)_{\hom,\QQ}$ lifts to some $\alpha\in \CH^*(S)_{\QQ}$ such that $v_*(\alpha)=\beta$. Since the action of $v$ on cohomology is injective, $\alpha$ is homologically trivial, i.e.\ $\alpha\in \CH_0(S)_{\hom,\QQ}$. 
	
	Recall that $\CH_1(X)$ is generated by lines by a result of Paranjape \cite{Paranjape}, see also \cite[Cor.\ 4.3]{Mingmin1}. Let $i \colon \ell \subseteq X$ be the inclusion of a line and consider the associated second syzygy sheaf $\kf_\ell$ of $\ki_\ell(1)$ defined by:
	\[ \begin{tikzcd}
	0 \ar[r] & \kf_\ell \ar[r] & H^0(X, \ki_\ell(1))\otimes \ko_X \ar[r, "\ev"] & \ki_\ell(1) \ar[r] & 0.
	\end{tikzcd}\]
	Here, $\ko_X(1)$ is the induced polarization of $X \subseteq \PP^5$ and $\ev$ is the evaluation map which is surjective, cf.\ \cite[Lem.\ 5.1]{KuzMar}. A straightforward computation in loc.\ cit.\ shows that $\kf_\ell$ is contained in $\ka_X$. Next, we compute the Mukai vector of $\kf_\ell$:
	\[ v(\kf_\ell)= v(\ko_X^{\oplus 4}) - v(\ki_\ell(1)) = v(\ko_X^{\oplus 4}) - v(\ko_X(1)) + v(\ko_\ell(1)). \]
	Using the Grothendieck--Riemann--Roch Theorem one finds:
	\begin{align*} 
	v(\ko_\ell(1))&= \ch(\ko_\ell) \cdot \ch(\ko_X(1)) \cdot  \td(X)^{\frac{1}{2}}= i_* (\td(\ell)) \cdot \ch(\ko_X(1)) \cdot \td(X)^{-\frac{1}{2}} \\
	&= ([\ell] + [\pt])\cdot \ch(\ko_X(1)) \cdot \td(X)^{-\frac{1}{2}},
	\end{align*}
	where $[\pt] \in \CH_0(X)\cong \ZZ$ is the class of any closed point ($X$ is rationally connected). The Todd class of $X$ is a polynomial in the class of a hyperplane section $h = c_1(\ko_X(1))$, in fact
	\[ \td(X)= 1 + \frac{3}{2} h + \frac{5}{4} h^2 + \frac{3}{4} h^3 + \frac{1}{3}h^4.\]
	Therefore, $v(\ko_\ell(1))= [\ell] + \frac{5}{4} [\pt]$ and
	\[ \phi([\kf_\ell] - [\kf_{\ell'}]) = v(\ko_\ell(1)) - v(\ko_{\ell'}(1))= [\ell] - [\ell'], \]
	for each pair of lines $\ell$ and $\ell'$, which proves surjectivity of $\phi$ since $\CH_1(X)_{\hom,\QQ}$ is generated by cycles of this form.
	
	So far, we proved that $Z=v^3$ induces an isomorphism $\CH_0(S)_{\hom,\QQ} \congpf \CH_1(X)_{\hom,\QQ}$. As mentioned earlier, a variant of Manin's identity principle gives that $Z$ also induces an isomorphism of motives $\ttt(S)(1) \cong \ttt(X)$, which extends to an isomorphism $\hh(S)(1) \cong \LL \oplus \hh^{\pr}(X) \oplus \LL^3$. Indeed, the Picard rank $\rho$ of $S$ equals $\rho_2-1$ with $\rho_2 = \dim H^{2,2}(X,\QQ)$. Thus, there are cycles $W$, $W' \in \CH^3(S \times X)_\QQ$ such that 
	\begin{equation} \label{eq:5}\hskip 0.12em \ltrans \, W' \circ W = [\Delta_S],\quad  W \circ \hskip 0.12em \ltrans \, W' = \frac{1}{3}[h^3 \times h] + \pi_X^{\pr} + \frac{1}{3}[h \times h^3]. \tag{5}  \end{equation}
	This will be useful for the specialization argument below.\vskip0.4cm
	
	Next, assume that $d$ satisfies ($\ast$$\ast'$), i.e.\ $\Db(S, \alpha) \cong \ka_X$. The composition with the inclusion is again of Fourier--Mukai type (cf.\ \cite{TwistedFM}) and the formalism of Mukai vectors works in the twisted case as well, see \cite{EquivTwisted} for details. For $E \in \coh(S \times X, \alpha^{-1} \boxtimes 1)$ locally free and $n=\ord(\alpha)$ the order of the Brauer class, $E^{\otimes n}$ is naturally an untwisted sheaf and one defines (cf.\ \cite[Sec.\ 2.1]{MotivesIsog})
	\[v(E) = \sqrt[n]{\ch(E^{\otimes n})} \cdot \sqrt{\td(S\times X)}. \]
	The $n$-th root can be obtained formally, since $\rk(E) \neq 0$. Using a locally free resolution, this definition extends to all twisted coherent sheaves. Define the cycle $Z$ as above. The proof now works analogously, replacing $\Db(S)$ by $\Db(S,\alpha)$ and $\K(S)$ by $\K(S, \alpha)$. \vskip0.4cm
	
	Finally, we prove the assertion for any $X_0\in \kc_d$ via specialization. Let $T \subseteq \kc_d$ be a curve passing through the point corresponding to $X_0$ such that there are families of K3 surfaces (resp.\ cubic fourfolds) $\ks$ and $\kx$ over $T$ with an exact equivalence $\Db(\ks_s) \cong \ka_{\kx_s}$ over a very general point $s\in T$ and $\kx_0 \cong X_0$ for a closed point $0\in T$, see \cite{AT}. Write $S_0$ for the fibre of $\ks$ over 0.
	
	By a standard argument (see e.g.\ \cite[Lem.\ 8]{Stefan}) we may assume that $T$ is the spectrum of a complete discrete valuation ring $R\cong \CC \llbracket t \rrbracket $ with generic point $\eta$ and closed point $0$. Write $K=\CC(\!(t)\!)$ for its fraction field and $\bar{K}$ for an algebraic closure of $K$.
	
	Let $W$, $W' \in \CH^3(\ks_{\bar{\eta}} \times_{\bar{K}} \kx_{\bar{\eta}})$ be as above, such that \eqref{eq:5} holds. In fact, all cycles of  \eqref{eq:5} are defined over a finite extension $\CC(\!(t^{\frac{1}{n}})\!)$ of $K$. Replacing $R$ by $\CC\llbracket t^{\frac{1}{n}}\rrbracket$, we may assume that the cycles $W$ and $W'$ are defined over $K$.
	Recall the specialization map for Chow groups (see \cite[Ch.\ 10.1]{Fulton} for details), which is compatible with intersection product, pullback and proper pushforward. We obtain cycles $W_0$, $W_0' \in \CH^3(S_0 \times X_0)_\QQ$ such that equalities of the form \eqref{eq:5} hold. Thus, $W_0$ induces an isomorphism of motives $\hh(S_0)(1) \cong \LL \oplus \hh^{\pr}(X_0) \oplus \LL^3$. The action on Chow groups restricts to an isomorphism of homologically trivial cycles $\CH_0(S)_{\hom,\QQ} \congpf \CH_1(X)_{\hom,\QQ}$ induced by $\pi_{X_0}^{4,\tr} \circ W_0 \circ \pi_{S_0}^{2,\tr}$. In fact, $\CH_0(S)_{\hom}$ and $\CH_1(X)_{\hom}$ are both divisible and torsion-free, see Lemma~\ref{divisible}. Hence, tensoring with $\QQ$ is a bijection and we obtain an isomorphism of integral Chow groups.
\end{proof}

\begin{cor} Let $X\in \kc_d$ be a special cubic fourfold with d satisfying \emph{($\ast$$\ast'$)} and $S$ an associated (twisted) K3 surface. Then $\hh(X)$ is finite dimensional if and only if $\hh(S)$ is finite dimensional. Moreover, if $\rho_2=\dim H^{2,2}(X,\QQ) \geq 20$, then $\hh(X)$ is finite dimensional.
	
\end{cor}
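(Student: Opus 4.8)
The equivalence asserted first is, in effect, a restatement of the last clause of Theorem~\ref{thm:main2}, so the plan is to read it off from the decomposition established there. That theorem provides $\hh(X) \cong \one \oplus \hh(S)(1) \oplus \LL^2 \oplus \LL^4$. The Tate summands $\one$, $\LL^2$ and $\LL^4$ are finite dimensional; moreover $\hh(S)(1)$ is a Tate twist of $\hh(S)$, and since Tate twisting is an autoequivalence of $\Mot_\CC$ it preserves finite dimensionality, so $\hh(S)(1)$ is finite dimensional if and only if $\hh(S)$ is. Kimura's criterion (Proposition~\ref{prop:Kimura}), by which a finite direct sum of motives is finite dimensional exactly when each summand is, then yields that $\hh(X)$ is finite dimensional if and only if $\hh(S)$ is.

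For the \emph{moreover} clause it thus suffices to prove that $\hh(S)$ is finite dimensional under the hypothesis $\rho_2 \geq 20$, and the argument I would give proceeds in two steps. The first is numerical: the Hodge isometry $\widetilde H(S,\alpha,\ZZ) \cong \widetilde H(\ka_X,\ZZ)$ restricts to an isometry of transcendental lattices, so that $\rank T(S) = \rank T(X) = 23 - \rho_2$ and $\rho(S) = \rho_2 - 1$, exactly as recorded in the proof of Theorem~\ref{thm:main2}. Hence $\rho_2 \geq 20$ forces $\rho(S) \geq 19$, so $T(S)$ has rank at most three and signature $(2, \leq 1)$. The second step invokes finite dimensionality of K3 surfaces of large Picard rank: since $T(S)$ (of rank at most three) admits a primitive embedding into $U^{\oplus 3}$, Morrison's theorem endows $S$ with a Shioda--Inose structure relating it to the Kummer surface $\Km(A)$ of an abelian surface $A$. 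The attendant algebraic correspondences realize $\ttt(S)$ as a direct summand of $\hh(A)$, which is finite dimensional by Kimura's theorem for abelian varieties \cite{Kimura}. Consequently $\ttt(S)$ is finite dimensional, and as $\hh(S) \cong \one \oplus \LL^{\oplus \rho(S)} \oplus \ttt(S) \oplus \LL^2$ the motive $\hh(S)$ is finite dimensional; combined with the first part, so is $\hh(X)$.

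The step I expect to be the crux is the promotion of the Hodge-theoretic Shioda--Inose relation to an honest direct-summand relation of Chow motives: Morrison's theorem delivers only a Hodge isometry of transcendental lattices, whereas finite dimensionality must be transported along a genuine algebraic cycle. The remedy is by now standard---the rational quotient maps underlying the Shioda--Inose construction supply the required correspondences, and an identity-principle argument of the kind already used in the proof of Theorem~\ref{thm:main2} upgrades the induced isomorphism on zero-cycles to the realization of $\ttt(S)$ as a direct summand of $\hh(A)$ used above. Everything else is formal bookkeeping with Kimura's stability properties of finite dimensional motives.
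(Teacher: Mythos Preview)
Your proof is correct and follows essentially the same route as the paper: the decomposition $\hh(X)\cong\one\oplus\hh(S)(1)\oplus\LL^2\oplus\LL^4$ from Theorem~\ref{thm:main2} plus Kimura's stability properties handles the equivalence, and for the \emph{moreover} clause the paper likewise passes from $\rho_2\ge 20$ to $\rho(S)\ge 19$, invokes Morrison's Shioda--Inose structure, and concludes via finite dimensionality of motives of abelian varieties. Your third paragraph spells out a step the paper leaves implicit (that the Shioda--Inose correspondences realise $\ttt(S)$ as a summand of $\hh(A)$), but this is elaboration rather than a different argument.
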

\begin{proof}
	The above theorem evidently implies $\hh(X) \cong \one \oplus \hh(S)(1) \oplus \LL^2 \oplus \LL^4$. This proves the first assertion. If $\rho_2=\dim H^{2,2}(X,\QQ) \geq 20$, then the Picard rank of $S$ is at least 19 and, therefore, $S$ admits a Shioda--Inose structure, cf.\ \cite[Cor.\ 6.4]{Morrison}. The motive of an abelian variety is finite dimensional, see e.g.\ \cite[Ch.\ 4.6, Thm.\ 2.7.2]{PureMotives}. Thus, $\hh(S)$ is finite dimensional and we conclude using Proposition~\ref{prop:Kimura}.
\end{proof}
\subsection{Examples} \label{subsec:examples}
This section contains a comparison with the work of Bolognesi, Pedrini \cite{BolPed} and some applications of Theorem~\ref{thm:main2}. In each example, the relation on the level of motives between the K3 surface and the cubic fourfold becomes visible by a concrete geometric construction.

\begin{ex} [Cubic fourfolds containing a plane]\label{ex:plane}
	Consider the divisor $\kc_8 \subseteq \kc$. It corresponds exactly to the cubic fourfolds $X$ containing a plane, cf.\ \cite[Sec.\ 3]{Voisin}. In this case, there is the following standard construction:
	Let $\widetilde{X}$ be the blow-up of $X$ along a plane $P$. Projecting $X$ from $P$ onto a disjoint plane in $\PP^5$ yields a rational map which can be resolved to give a morphism $q \colon \widetilde{X} \to \PP^2$. The fiber of $q$ over a point $x\in \PP^2$ is the residual surface of the intersection $\overline{xP} \cap X$. Generically, it is a smooth quadric surface, i.e.\ isomorphic to $\PP^1 \times \PP^1$ and has two different rulings. The discriminant divisor of $q$ is a sextic curve in $\PP^2$ over which each fibre is singular with only one ruling. More precisely, let $F(\widetilde{X}/\PP^2)$ be the relative Fano variety of lines with universal line $L \subseteq F(\widetilde{X}/\PP^2) \times \widetilde{X}$. The projection $L \to \PP^2$ factors through a double cover $S\to \PP^2$ branched along a sextic curve, which is smooth for a general choice of $X$. Thus, $S$ is a K3 surface. The projection $L \to S$ is a $\PP^1$-bundle (a Brauer--Severi variety) and induces a Brauer class $\alpha \in \Br(S)$. 
	Kuznetsov showed that there is an exact equivalence $\Db(S,\alpha) \cong \ka_X$, cf.\ \cite[Thm.\ 4.3]{Kuz}. 
	\medskip
	
	It is well known that rationality of the cubic fourfold $X$ follows, if $q$ has a rational section. This holds true if there is an additional surface $W\subseteq X$ such that $\deg(W) - \langle P,W \rangle $ is odd. In this case, it was observed in \cite[Sec.\ 8]{BolPed} that the isomorphism $\ttt(S)(1) \cong \ttt(X)$ would follow from finite dimensionality of $\hh(S)$. In fact, Theorem~\ref{thm:main2} implies that the isomorphism $\ttt(S)(1) \cong \ttt(X)$ holds without any further assumptions.
\end{ex}
\begin{ex}[Cubic fourfolds with an automorphism of order three]
	Let $X$ be a cubic fourfold given by an equation of the form
	\[ f(x_0,x_1,x_2) - g(x_3,x_4,x_5) =0, \] 
	where $f$ and $g$ are homogeneous polynomials of degree three. Denote by $\zeta_3$ a primitive third root of unity. Then $X$ is invariant under the automorphism $\sigma$ of $\PP^5$ given by 
	\[[x_0 : x_1 : x_2 : x_3 : x_4 : x_5] \mapsto [x_0 : x_1 : x_2 : \zeta_3 x_3 : \zeta_3 x_4 : \zeta_3 x_5]. \] Thus, there is an induced automorphism $\sigma_F$ of the Fano variety $F(X)$, which is in fact symplectic, i.e.\ $\sigma_F | _{H^{2,0}} = \id $, see e.g.\ \cite{Fu} for a classification of polarized symplectic automorphisms of $F(X)$. Consider the cubic surfaces $Z_1= \{ f(x_0,x_1,x_2) - s^3 =0 \}$ and $Z_2=\{g(x_3,x_4,x_5)-t^3=0 \}$ in $\PP^3$ with $s$ resp.\ $t$ as additional variables. The rational map 
	\[ ([x_0:x_1:x_2:s], [x_3:x_4:x_5:t]) \mapsto [\frac{x_0}{s}:\frac{x_1}{s}:\frac{x_2}{s}:\frac{x_3}{t}:\frac{x_4}{t}:\frac{x_5}{t}] \]
	induces a degree three morphism $\reallywidetilde{Z_1 \times Z_2} \to X$ from the blow-up of $Z_1 \times Z_2$ along $E_1 \times E_2$. Here, $E_i$ is the cubic curve in $Z_i$ defined by the vanishing of $s$ resp.\ $t$, see e.g. \cite[Prop.\ 1.2]{CT}. 
	
	Note that finite dimensionality of $\hh(X)$ follows from Proposition~\ref{prop:Kimura} since rational surfaces have finite dimensional motives. Moreover, this morphism can be used to find two disjoint planes $P_1$ and $P_2$ contained in $X$; if $\ell_i \subseteq Z_i$ are lines (recall that $Z_i$ contains 27 of them) the image of the product $\ell_1 \times \ell_2$ is a plane in $X$ and certain choices of lines produce disjoint planes, cf.\ \cite[Rem.\ 2.4]{CT}. There is a birational map from $P_1 \times P_2$ to $X$ sending a pair of points $(x,y)$ to the residual point of the intersection $\overline{xy} \cap X$. The indeterminacy locus $S\subseteq P_1 \times P_2$ parametrizes lines contained in $X$ joining the two planes. It is a complete intersection of divisors of type $(1,2)$ and $(2,1)$, i.e.\ $S$ is a K3 surface, see \cite[Ex.\ 5.9]{GalSh}. Resolving the indeterminacy locus gives an isomorphism $\Bl_S(P_1 \times P_2) \congpf \Bl_{P_1 \cup P_2}(X)$ which induces $\ttt(S)(1) \cong \ttt(X)$ by comparing homologically trivial cycles. In fact, the cubic fourfold $X$ satisfies condition $(\ast {\ast} \ast )$, since the Fano variety of $X$ is birational to the Hilbert scheme $S^{[2]}$.
\end{ex}
\begin{ex} [Cubic fourfolds with an involution]
	Consider the involution $\sigma$ on $\PP^5$ given by 
	\[[x_0 : x_1 : x_2 : x_3 : x_4 : x_5] \mapsto [x_0 : x_1 : x_2 : x_3 : -x_4 : -x_5].\]
	A cubic $X$ invariant under $\sigma$ is always of the form
	\[ \{ F(x_0,x_1,x_2,x_3) + x_4^2L_1 + x_5^2L_2 + x_4x_5L_3=0 \}, \]
	where $F$ is homogeneous of degree three and the $L_i$ are linear forms in $x_0,\ldots,x_3$. Note that the fixed locus of $\sigma$ in $\PP^5$ is the union of $\PP^3 = \{x_4 = x_5 = 0\}$ and the line $\ell = \{ [0:0:0:0:x_4:x_5] \}$. Thus, the fixed locus in $X$ consists of a cubic surface $W$ and the line $\ell$. 
	
	It was shown in \cite{Fu} that $\sigma$ induces a symplectic involution on the Fano variety $F(X)$. Moreover, the fixed locus in $F(X)$ can be described explicitly. It consists of the line $\ell$, the 27 lines contained in $W$ and a K3 surface $S$. The surface $S$ parametrizes lines contained in $X$ joining $W$ and $\ell$. It is a double cover of the cubic $W$ branched along the degree 6 curve $L_3^2 - L_1L_2$.
	This suggests that $S$ is associated to $X$: The inclusion $S \subseteq F(X)$ induces an isomorphism $H^{2,0}(F(X)) \cong H^{2,0}(S)$ and an isomorphism of transcendental lattices. Composing with the incidence correspondence, we get $T(S)(-1) \cong T(X)$. It is not directly obvious that this is an isometry. An isomorphism $\ttt(S)(1) \cong \ttt(X)$ was nevertheless established by Bolognesi and Pedrini~\cite[Sec.\ 5.2]{BolPed}) building on work of Laterveer~\cite{Lat17}.
\end{ex}
\begin{ex}[Cyclic cubic fourfolds]
	Let $f(x_0,\ldots ,x_4)$ be a homogeneous polynomial of degree three, defining a smooth cubic threefold $Y\subseteq \PP^4$. A cyclic cubic fourfold is a triple cover $X\to \PP^4$ ramified along $Y$. It is a smooth cubic hypersurface $X\subseteq \PP^5$ with an equation:
	\[ f(x_0,\ldots,x_4) + x_5^3=0 \]
	and covering automorphism $\sigma\colon X \congpf X$ given by: 
	\[[x_0 : x_1 : x_2 : x_3 : x_4 : x_5] \mapsto [x_0 : x_1 : x_2 : x_3 : x_4 : \zeta_3 x_5]. \]
	
	It was shown in \cite{LatCubics} that the motive of a cyclic cubic fourfold $X$ is finite dimensional. If $X$ satisfies condition ($\ast$$\ast'$) and $S$ is an associated (twisted) K3 surface, then $\ttt(S)(1) \cong \ttt(X)$ and $\hh(S)$ is finite dimensional as well. Unfortunately, it is not clear which K3 surfaces can be associated to $X$ as above. Note that the family of cyclic cubic fourfolds contains the Fermat cubic, so in particular it has non-trivial intersection with the divisor $\kc_8$ of cubic fourfolds containing a plane. However, there exists an example of a cyclic Pfaffian cubic fourfold containing no plane, see \cite[Prop.\ 5.1]{BCS2}.
\end{ex}

\bibliography{refs}

\providecommand{\bysame}{\leavevmode\hbox to3em{\hrulefill}\thinspace}
\providecommand{\MR}{\relax\ifhmode\unskip\space\fi MR }
% \MRhref is called by the amsart/book/proc definition of \MR.
\providecommand{\MRhref}[2]{%
  \href{http://www.ams.org/mathscinet-getitem?mr=#1}{#2}
}
\providecommand{\href}[2]{#2}
\begin{thebibliography}{10}

\bibitem{AT}
N. Addington and R. Thomas, \emph{Hodge theory and derived categories of cubic
  fourfolds}, Duke Math. J. \textbf{163} (2014), no.~10, 1885--1927.

\bibitem{AddRF}
N. Addington, \emph{New derived symmetries of some hyperk\"ahler varieties},
  Algebr. Geom. \textbf{3} (2016), no.~2, 223--260. \MR{3477955}

\bibitem{Add}
N. Addington, \emph{On two rationality conjectures for cubic fourfolds}, Math.
  Res. Lett. \textbf{23} (2016), no.~1, 1--13. \MR{3512874}

\bibitem{Arapura}
D. Arapura, \emph{Motivation for {H}odge cycles}, Adv. Math. \textbf{207}
  (2006), no.~2, 762--781. \MR{2271985}

\bibitem{BLMNPS}
A. Bayer, M. Lahoz, E. Macr{\`i}, M. Nuer, A. Perry, and P. Stellari,
  \emph{Stability conditions in families and families of hyperk\"ahler
  varieties}, \url{http://www.maths.ed.ac.uk/~abayer/Oberwolfach-report.pdf}.

\bibitem{Bloch}
S. Bloch, \emph{Lectures on algebraic cycles}, Duke University Mathematics
  Series, IV, Duke University, Mathematics Department, Durham, N.C., 1980.
  \MR{558224}

\bibitem{BCS2}
S. Boissière, C. Camere, and A. Sarti, \emph{Cubic threefolds and hyperkähler
  manifolds uniformized by the 10-dimensional complex ball}, 2017,
  arXiv:1801.00287.

\bibitem{BolPed}
M. Bolognesi and C. Pedrini, \emph{Rationality questions and motives of cubic
  fourfolds}, 2017, arXiv:1710.05753.

\bibitem{TwistedFM}
A. Canonaco and P. Stellari, \emph{Twisted Fourier--Mukai functors}, Adv. Math.
  \textbf{212} (2007), no.~2, 484--503.

\bibitem{CT}
J.-L. Colliot-Th\'el\`ene, \emph{{${\rm CH}_0$}-trivialit\'e universelle
  d'hypersurfaces cubiques presque diagonales}, Algebr. Geom. \textbf{4}
  (2017), no.~5, 597--602. \MR{3710057}

\bibitem{dCM}
M.~A.~A. de~Cataldo and L. Migliorini, \emph{The {C}how groups and the motive
  of the {H}ilbert scheme of points on a surface}, J. Algebra \textbf{251}
  (2002), no.~2, 824--848. \MR{1919155}

\bibitem{dB}
S. del Ba\~no, \emph{On the {C}how motive of some moduli spaces}, J. Reine
  Angew. Math. \textbf{532} (2001), 105--132. \MR{1817504}

\bibitem{Fu}
L. Fu, \emph{Classification of polarized symplectic automorphisms of Fano
  varieties of cubic fourfolds}, Glasgow Math. J. \textbf{58} (2015), no.~01,
  17--37.

\bibitem{Fulton}
W. Fulton, \emph{Intersection Theory}, Springer Berlin Heidelberg, 1984.

\bibitem{GalSh}
S. Galkin and E. Shinder, \emph{The Fano variety of lines and rationality
  problem for a cubic hypersurface}, 2014, arXiv:1405.5154.

\bibitem{GG}
S. Gorchinskiy and V. Guletski{\u{\i}}, \emph{Motives and representability of
  algebraic cycles on threefolds over a field}, J. Alg. Geom. \textbf{21}
  (2011), no.~2, 347--373.

\bibitem{HassettThesis}
B. Hassett, \emph{Special Cubic Fourfolds}, Compos. Math. \textbf{120} (2000),
  no.~1, 1–23.

\bibitem{HuyCHM}
D. Huybrechts, \emph{Compact hyperk\"ahler manifolds}, Calabi-{Y}au manifolds
  and related geometries ({N}ordfjordeid, 2001), Universitext, Springer,
  Berlin, 2003, pp.~161--225. \MR{1963562}

\bibitem{HuyK3cat}
D. Huybrechts, \emph{The {K}3 category of a cubic fourfold}, Compos. Math.
  \textbf{153} (2017), no.~3, 586--620. \MR{3705236}

\bibitem{MotivesIsog}
D. Huybrechts, \emph{Motives of isogenous K3 surfaces}, Comment. Math. Helv.
  (2017), to appear, arXiv:1705.04063.

\bibitem{EquivTwisted}
D. Huybrechts and P. Stellari, \emph{Equivalences of twisted K3 surfaces},
  Math. Ann. \textbf{332} (2005), no.~4, 901--936.

\bibitem{Kimura}
S.-I. Kimura, \emph{Chow groups are finite dimensional, in some sense}, Math.
  Ann. \textbf{331} (2005), no.~1, 173--201.

\bibitem{KuzMar}
A. Kuznetsov and D. Markushevich, \emph{Symplectic structures on moduli spaces
  of sheaves via the {A}tiyah class}, J. Geom. Phys. \textbf{59} (2009), no.~7,
  843--860. \MR{2536849}

\bibitem{Kuz}
A. Kuznetsov, \emph{Derived categories of cubic fourfolds}, Cohomological and
  geometric approaches to rationality problems, Progr. Math., vol. 282,
  Birkh\"auser Boston, Inc., Boston, MA, 2010, pp.~219--243. \MR{2605171}

\bibitem{LatCubics}
R. Laterveer, \emph{A family of cubic fourfolds with finite-dimensional
  motive}, 2017, arXiv:1708.05278.

\bibitem{Lat17}
R. Laterveer, \emph{On the {C}how groups of certain cubic fourfolds}, Acta
  Math. Sin. (Engl. Ser.) \textbf{33} (2017), no.~7, 887--898. \MR{3665251}

\bibitem{LatFano}
R. Laterveer, \emph{A remark on the motive of the {F}ano variety of lines of a
  cubic}, Ann. Math. Qu\'e. \textbf{41} (2017), no.~1, 141--154. \MR{3639654}

\bibitem{MaZh}
A. Marian and X. Zhao, \emph{On the group of zero-cycles of holomorphic
  symplectic varieties}, 2017, arXiv:1711.10045.

\bibitem{Markman}
E. Markman, \emph{Generators of the cohomology ring of moduli spaces of sheaves
  on symplectic surfaces}, J. Reine Angew. Math. \textbf{544} (2002), 61--82.
  \MR{1887889}

\bibitem{Morrison}
D.~R. Morrison, \emph{On K3 surfaces with large Picard number}, Inv. Math.
  \textbf{75} (1984), no.~1, 105--121.

\bibitem{PureMotives}
J. Murre, J. Nagel, and C. Peters, \emph{Lectures on the Theory of Pure
  Motives}, University Lecture Series, American Mathematical Society, 2013.

\bibitem{Orlov}
D.~O. Orlov, \emph{Equivalences of derived categories and K3 surfaces}, J.
  Math. Sci. \textbf{84} (1997), no.~5, 1361--1381.

\bibitem{Paranjape}
K. Paranjape, \emph{Cohomological and cycle-theoretic connectivity}, Ann. Math.
  \textbf{139} (1994), no.~3, 641--660. \MR{1283872}

\bibitem{Pedrini}
C. Pedrini, \emph{On the rationality and the finite dimensionality of a cubic
  fourfold}, 2017, arXiv:1701.05743.

\bibitem{Ulrike}
U. Rie\ss, \emph{On the {C}how ring of birational irreducible symplectic
  varieties}, Manuscripta Math. \textbf{145} (2014), no.~3-4, 473--501.
  \MR{3268859}

\bibitem{Rojtman}
A.~A. Rojtman, \emph{The torsion of the group of {$0$}-cycles modulo rational
  equivalence}, Ann. Math. \textbf{111} (1980), no.~3, 553--569. \MR{577137}

\bibitem{Stefan}
S. Schreieder, \emph{On the rationality problem for quadric bundles}, 2017,
  arXiv:1706.01356.

\bibitem{Mingmin1}
M. Shen, \emph{On relations among 1-cycles on cubic hypersurfaces}, J.
  Algebraic Geom. \textbf{23} (2014), no.~3, 539--569. \MR{3205590}

\bibitem{Mingmin2}
M. Shen, \emph{Rationality, universal generation and the integral Hodge
  conjecture}, 2016, arXiv:1602.07331.

\bibitem{ShenVial}
M. Shen and C. Vial, \emph{The {F}ourier transform for certain hyperk\"ahler
  fourfolds}, Mem. Amer. Math. Soc. \textbf{240} (2016), no.~1139, vii+163.
  \MR{3460114}

\bibitem{Vial}
C. Vial, \emph{Remarks on motives of abelian type}, Tohoku Math. J. \textbf{69}
  (2017), no.~2, 195--220. \MR{3682163}

\bibitem{Voisin}
C. Voisin, \emph{Th{\'e}or{\`e}me de Torelli pour les cubiques de {$\PP^5$}},
  Inv. Math. \textbf{172} (2008), no.~2, 455--458.

\end{thebibliography}
\bibliographystyle{myamsplain}
\end{document}